\newtheorem{theorem}{{Theorem}}[section]
\newtheorem{defn}[theorem]{{Definition}}%[section]
\newtheorem{proposition}[theorem]{{Proposition}}%[section]
\newtheorem{isom.ext}[theorem]{{Trivial isometric extension}}%[section]
\newtheorem{definition}[theorem]{{Definition}}%[section]
\newtheorem{lemma}[theorem]{{Lemma}}%[section]
\newtheorem{remark}[theorem]{{Remark}}%[section]
\begin{document}

\title[Lattices in Splittable solvable Lie groups]{Splittable Lattices in the metabelian  solvable Lie group
$\mathbb{R}^n\rtimes\mathbb{R}^m$}

\author[Dali, Riahi]{B\'echir Dali $\& $ Moncef Riahi}
\address{ Department of Mathematics\\
Faculty of Sciences of Bizerte\\
University of Carthage\\
7021 Jarzouna, Bizerte - Tunisia\\}

\address{University of Carthage, Faculty of science of Bizerte, (UR17ES21),
"Dynamical Systems and their Applications", 7021, Jarzouna, Tunisia.}
\email{bechir.dali@fsb.ucar.tn, bechir.dali@fss.rnu.tn}
\email{moncef.riahi@fsb.ucar.tn, moncef.ipeib@gmail.com}

\keywords{Solvable and Nilpotent Lie groups, Discrete subgroups of Lie groups , semidirect product of groups, solvmanifolds.}

\subjclass[2010]{22E25, 22E40}

%\date{\today}

%--------------------------------------------------------------------
\maketitle
% ----------------------------------------------------------------

\begin{abstract}
The purpose of this note is  describe and classify the splittable lattices in the completely solvable metabelian Lie group (semidirect product of abelian vector groups) $G:=\mathbb{R}^n\rtimes_\eta\mathbb{R}^m$, where $\eta$ is the continuous representation of the topological additive abelian group  $\mathbb R^m$ in  $\mathbb R^n$ given by $\eta(t_1,\dots, t_m)=\exp(\sum_{j=1}^{m}t_j\Delta_j)$ with $(\Delta_j)_{1\leq j\leq m}$ is a set of pairwise commuting diagonal matrices in $\mathbb R^{n\times n}$.
\end{abstract}

\section{ Introduction}

In this paper, we shall use the following notational conventions. The symbol $\mathbb R^{m\times n}$ (resp.
$\mathbb Z^{m\times n}$) denotes the set of $m\times n$ real (resp. integer) matrices, $\mathbb R^n=\mathbb R^{n\times 1}$, $\mathbb Z^n=\mathbb Z^{n\times 1}$, $I_n$ is the $n\times n$ identity matrix, $0_n$ is the $n\times n$ null matrix.

By a lattice in a locally compact group $G$, we mean a discrete subgroup $\Gamma$ such that the homogeneous space $G/{\Gamma}$ carries a finite $G-$invariant Borelean measure. If in addition $G/{\Gamma}$ is compact,
$\Gamma$ is said to be a uniform lattice. A necessary condition for a group to contain a lattice is that the group must be unimodular. Recall that a Lie group $G$ is called unimodular if for all $X \in \frak g$
holds $tr (ad_X )= 0$, where $\frak g$ denotes the Lie algebra of $G$.
This allows for the easy construction of groups without lattices, for example the group of invertible upper triangular matrices or the affine groups. It is also not very hard to find unimodular groups without lattices.
For nilpotent Lie groups (which are unimodular) the theory simplifies much from the general case, and stays similar to the case of abelian groups. All lattices in a nilpotent Lie group are uniform, and if $N$ is a connected simply connected nilpotent Lie group (equivalently it does not contain a nontrivial compact subgroup) then a discrete subgroup is a lattice if and only if it is not contained in a proper connected subgroup \cite{MSR} (this generalises the fact that a discrete subgroup in a vector space is a lattice if and only if it spans the vector space).

A nilpotent Lie group $G$ contains a lattice if and only if the Lie algebra $\frak g$ of $G$ can be defined over the rationales. That is, if and
only if the Lie algebra $\frak g$ has a basis whose Lie
structure constants are integers.

While general lattices have a well-defined structure in Euclidean space and nilpotent groups, lattices in more general solvable groups are more complex and less rigid. Lattices in solvable Lie groups are much more difficult to handle than those in nilpotent Lie groups.
The criterion for nilpotent Lie groups to have a lattice given above does not apply to more general solvable Lie groups (see \cite{T}). It remains true that any lattice in a solvable Lie group is uniform and that lattices in solvable Lie groups are finitely generated (\cite{MSR}).\\

In \cite{MM}, the authors describe the set of lattices in a special class of solvable Lie groups $\mathbb R^n\rtimes\mathbb R$. In \cite{TsYama}, the authors consider solvable Lie groups which are isomorphic to unimodularizations
of products of affine groups, and it is shown that a lattice of such a Lie group is determined, up to commensurability, by a totally real algebraic number field.

In harmonic analysis, wavelets and lattices can be studied within the context of Lie groups by leveraging the group's inherent structure for translation and dilation, allowing for the construction of wavelet systems on stratified Lie groups and the investigation of discrete decompositions via frames on function spaces like $L^2(G)$. Lattices, which are discrete subgroups, play a crucial role in defining the sampling points for these wavelets and ensuring the existence and properties of frames. Research in this area explores how to build wavelet frames on Lie groups and their subgroups, adapting concepts from Euclidean spaces to the more complex geometric settings of these groups. First identify a lattice $\Gamma$ within a simply connected solvable Lie group $G$. Then, use the structure of the group, often expressible as a semi-direct product $G\cong N\rtimes _{\phi }S$, and the lattice to define a set of operators for scaling and translation.
This involves creating a basis of functions by applying these operators, derived from the lattice elements, to a fundamental wavelet function or a scaling function. Wavelets can be constructed on the homogeneous spaces $G/\Gamma $ that are formed by taking a quotient of a Lie group $G$ by a lattice. The commensurability of lattices can be used to construct wavelets by leveraging the fact that the lattices are "closely related". This is especially true when using the decomposition of a split solvable Lie group and the commensurability of  lattices provides the framework for building a complete wavelet system.

 In this paper we are concerned with the characterization of lattices in the class of metabelian solvable (non nilpotent) Lie groups of the form $G=\mathbb R^n\rtimes_\eta\mathbb R^m$ where $\eta$ is a continuous representation of the topological additive group $\mathbb R^m$ in $SL_n(\mathbb R)$.
%%%%%%%%%%%%%%%%%%%%%%%%%%%%%%%%%%%%%%%%%

%%%%%%%%%%%%%%%%%%%%%%%%%%%%%%%%%%%%%%%%%%%

%%%%%%%%%%%%%%%%%%%%%%%%%%%%%%%%%%%%%%%%%%%%%%

%%%%%%%%%%%%%%%%%%%%%%%%%%%%%%%%%%%%%%%%%%%%%%

The paper is organized as follows: in Section 2, we recall some definitions and general properties of the lattices in solvable Lie groups. We recall also some basic tools which will be useful for the rest of the paper. In Section 3, we describe the splittable lattices in the completely solvable Lie group $G=\mathbb R^n\rtimes_\eta\mathbb R^m$ where $\eta$ is a continuous representation of the topological additive group $\mathbb R^m$ given by
$$
\eta\left(
\begin{array}{c}
t_1 \\
\vdots \\
t_m \\
\end{array}
\right)
=\exp{\left(\sum_{i=1}^{m}t_i\Delta_i\right)},\quad \left(
\begin{array}{c}
t_1 \\
\vdots \\
t_m \\
\end{array}
\right)\in \mathbb R^m
$$
with $\Delta_1,\dots, \Delta_m\in\mathbb R^{n\times n}$ satisfying certain properties. We also investigate the commensurability of two splittable lattices in $G$. Finally, in Section 4, we conclude the paper by giving examples of lattices in $\mathbb R^n\rtimes_\eta\mathbb R$ for $n=2,3$.

\section{Preliminaries}

\subsection{Solvmanifolds} A solvmanifold is a compact homogeneous space
$G/\Gamma$, where $G$ is a connected and simply-connected solvable Lie group and $\Gamma$ a lattice
in $G$, i.e. a discrete co-compact subgroup.
Every connected and simply connected solvable Lie group is diffeomorphic to
$\mathbb R^m$ (see e.g. \cite{VS}), hence solvmanifolds are aspherical and their fundamental group
is isomorphic to the considered lattice.
%Since [48, Theorem 3.6] directly imlies the
%following Theorem, the fundamental group plays an important role in the study of
%solvmanifolds.
Unfortunately, there is no simple criterion for the existence of a lattice in a connected and simply-connected solvable Lie group. We shall quote some necessary
criteria.

\begin{proposition} (\cite{M}). If a connected and simply-connected solvable Lie group admits a lattice then it is unimodular.
\end{proposition}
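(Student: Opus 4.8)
The goal is to show that the modular function $\Delta_G\colon G\to\R_{>0}$ is identically $1$. For a connected Lie group one has $\Delta_G(g)=|\det\mathrm{Ad}(g)|$, and $\Delta_G\equiv 1$ is equivalent to $\mathrm{tr}(\mathrm{ad}_X)=0$ for every $X\in\frak g$, which is exactly the unimodularity criterion recalled in the introduction; so it suffices to prove $\Delta_G\equiv 1$. The plan is to use the existence of a \emph{finite} $G$-invariant measure $\mu$ on $G/\Gamma$ in an essential way, the finiteness being precisely where the lattice hypothesis enters. I note in passing that the argument will use only local compactness of $G$ and the finite invariant measure, so solvability and simple-connectedness are not really needed; connectedness will merely streamline the last step.

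First I would record that $\Gamma$, being discrete, is unimodular, so its own modular function is trivial. By the standard criterion for the existence of a $G$-invariant measure on the homogeneous space $G/\Gamma$ (such a measure exists if and only if $\Delta_G$ and $\Delta_\Gamma$ agree on $\Gamma$, cf.\ the Weil/Bourbaki integration formula), the hypothesis forces $\Delta_G|_\Gamma\equiv 1$. Hence $\Gamma$ is contained in the closed normal subgroup $N:=\ker\Delta_G$.

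Next I would form the quotient $G/N$. Since $N$ is closed and normal, $G/N$ is a locally compact group, and $\Delta_G$ descends to a continuous \emph{injective} homomorphism $G/N\hookrightarrow\R_{>0}$. Because $\Gamma\subseteq N$, the projection $G\to G/N$ factors through $G/\Gamma$, giving a $G$-equivariant continuous map $\pi\colon G/\Gamma\to G/N$. I would then push the finite invariant measure forward: $\pi_*\mu$ is a nonzero finite Borel measure on $G/N$, and $G$-equivariance of $\pi$ together with $G$-invariance of $\mu$ makes $\pi_*\mu$ invariant under the translation action of $G/N$ on itself. Finally I invoke the fact that a locally compact group carrying a nonzero finite translation-invariant (Haar) measure must be compact. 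Thus $G/N$ is compact; but the only compact subgroup of $(\R_{>0},\times)$ is trivial, and injectivity of $G/N\hookrightarrow\R_{>0}$ forces $G/N=\{1\}$, i.e.\ $N=G$ and $\Delta_G\equiv 1$. (Connectedness of $G$ gives a second route to the same conclusion: $G/N$ is then connected and embeds in $\R_{>0}$, so if nontrivial it is isomorphic to $\R$, which is non-compact.)

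The steps requiring the most care are (i) justifying $\Delta_G|_\Gamma\equiv 1$ rigorously from the invariant-measure existence criterion, and (ii) checking that $\pi_*\mu$ is genuinely a Radon Haar-type measure so the compactness dichotomy applies; here one uses that $G/\Gamma$ and $G/N$ are $\sigma$-compact, whence every finite Borel measure is automatically Radon and any nonzero invariant one is proportional to Haar measure. The main obstacle is really conceptual rather than computational: one must see that \emph{finiteness} of $\mu$ is indispensable. Without it the pushforward could be the infinite Haar measure of $\R\cong G/N$, and indeed non-unimodular groups do admit discrete subgroups with infinite-volume quotient; it is the co-compactness packaged into the lattice hypothesis that produces the contradiction.
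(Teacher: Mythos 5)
The paper offers no proof of this proposition to compare against: it is quoted as a known result with a citation to Milnor \cite{M}, so your argument has to be judged purely on its own merits, and on those merits it is correct. Your route is the standard measure-theoretic one: discreteness of $\Gamma$ gives $\Delta_\Gamma\equiv 1$; Weil's criterion for the existence of an invariant measure on $G/\Gamma$ forces $\Delta_G|_\Gamma\equiv 1$; then $\Gamma\subseteq N:=\ker\Delta_G$, the finite invariant measure pushes forward to a finite translation-invariant Radon measure on the group $G/N$, which is therefore compact, and a compact group injecting continuously into $(\R_{>0},\times)$ is trivial. Each step is sound, and you correctly flag the two delicate points: that the finite Borel measure is Radon (automatic here since Lie groups and their quotients are second countable, hence $\sigma$-compact) and that finiteness of $\mu$ is exactly what rules out the non-unimodular case. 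Your observation that the proof uses none of the hypotheses ``connected, simply connected, solvable'' is also accurate: the argument establishes unimodularity for any locally compact second countable group admitting a lattice, which is strictly more general than the statement; the restrictive hypotheses appear in the proposition only because that is the ambient setting of the paper (and of Milnor's Lemma, which concerns Lie groups with uniform discrete subgroups). One optional streamlining: instead of forming $G/N$, you can push $\mu$ forward under the map $G/\Gamma\to\R_{>0}$ induced by $\Delta_G$ itself (well defined since $\Delta_G|_\Gamma\equiv 1$); the resulting finite measure on $\R_{>0}$ is invariant under multiplication by the subgroup $\Delta_G(G)$, and a finite nonzero measure on $\R_{>0}$ cannot be invariant under multiplication by any element $\neq 1$, so $\Delta_G(G)=\{1\}$ directly, avoiding the quotient-group formalism.
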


The following theorems  are fundamental
to the study of lattices in arbitrary solvable Lie groups. Proved first
by Mostow, it throws a footbridge between lattices in arbitrary solvable Lie
groups and lattices in nilpotent Lie groups.
In the statement of this theorem a nilradical in a connected solvable Lie
group $G$ is the largest connected nilpotent normal subgroup in $G$.

\begin{theorem}(Mostow 1964, Auslander 1973. Raghunathan 1972). If $\Gamma$
is a lattice in a connected solvable Lie group $G$, and $N $ the nilradical of the
group $G$, then $\Gamma\cap N $ is a lattice in $N$ .
\end{theorem}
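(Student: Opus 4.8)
Although the statement is quoted here as a classical theorem, let me indicate the line of argument I would follow. We may assume $G$ is simply connected: passing to the universal cover $\widetilde G\to G$ replaces $\Gamma$ by its (cocompact) preimage and sends the nilradical to the nilradical, so the general case descends from this one. The first structural remark is that for a solvable Lie algebra the derived algebra $[\mathfrak{g},\mathfrak{g}]$ is a nilpotent ideal, hence is contained in the nilradical $\mathfrak{n}$; consequently $G/N$ is a simply connected abelian Lie group, i.e. $G/N\cong\R^{k}$ with $k=\dim G-\dim N$. Recall also, from the facts collected above, that the lattice $\Gamma$ is finitely generated and polycyclic and that, every lattice in a solvable Lie group being uniform, $G/\Gamma$ is compact. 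After replacing $\Gamma$ by a torsion-free subgroup of finite index (possible by Selberg's lemma, since polycyclic groups are linear) the quotient $G/\Gamma$ becomes a closed aspherical manifold of dimension $\dim G$ with fundamental group $\Gamma$, whence the Hirsch length satisfies $h(\Gamma)=\mathrm{cd}_{\Q}(\Gamma)=\dim G$.

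The plan is then to run a dimension count through the projection $\pi\colon G\to G/N$. Put $\Lambda:=\Gamma\cap N=\ker(\pi|_{\Gamma})$ and $Q:=\pi(\Gamma)\cong\Gamma/\Lambda$, giving a short exact sequence $1\to\Lambda\to\Gamma\to Q\to 1$ of polycyclic groups, for which Hirsch length is additive: $h(\Gamma)=h(\Lambda)+h(Q)$. Two upper bounds are then available. On the one hand $\Lambda$ is a discrete (hence, inside a simply connected nilpotent group, automatically torsion-free) subgroup of $N$, so Mal'cev's theory gives $h(\Lambda)\le\dim N$, with equality precisely when $\Lambda$ is a lattice in $N$. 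On the other hand $Q$ is a finitely generated subgroup of $\R^{k}$, hence free abelian of rank $h(Q)\le k=\dim(G/N)$.

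Combining these with the additivity yields
$$\dim G=h(\Gamma)=h(\Lambda)+h(Q)\le\dim N+\dim(G/N)=\dim G,$$
so equality must hold at every step; in particular $h(\Lambda)=\dim N$, and the Mal'cev criterion then forces $\Lambda=\Gamma\cap N$ to be a (cocompact) lattice in $N$, as desired. The main obstacle is concentrated in the two nontrivial inputs that make the numerology tight. The identity $h(\Gamma)=\dim G$ is where the uniformity of $\Gamma$ is genuinely used, and the implication ``full Hirsch length $\Rightarrow$ cocompact'' for $\Lambda$ is the nilpotent Mal'cev correspondence already invoked in the introduction for nilpotent lattices. Geometrically, the crux is equivalent to showing that $\pi(\Gamma)$ is discrete, i.e. that $\Gamma N$ is closed in $G$; the Hirsch-length bookkeeping is precisely the device that converts this closedness/cocompactness question into the dimension equality above. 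An alternative, more computational route, closer to Mostow's original argument, would replace this step by analysing the real algebraic (Zariski) hull of $\mathrm{Ad}(\Gamma)$ in $GL(\mathfrak{g})$ and its unipotent radical, but the Hirsch-length argument seems the most economical.
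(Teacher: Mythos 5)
The paper does not prove this statement at all --- it is quoted as a classical theorem with attributions (Mostow, Auslander, Raghunathan) --- so the only question is whether your argument stands on its own. It does not: there is a genuine gap at the inequality $h(Q)\le k$. You assert that $Q=\pi(\Gamma)$ is a finitely generated subgroup of $G/N\cong\R^{k}$ and is ``hence free abelian of rank $h(Q)\le k$''. A finitely generated torsion-free abelian group is indeed free abelian, but its rank is bounded by $k$ only when the subgroup is \emph{discrete} in $\R^{k}$: the subgroup $\Z+\sqrt{2}\,\Z\subset\R$ is free abelian of rank $2$, and since $\dim_{\Q}\R=\infty$, the group $\R^{k}$ contains finitely generated free abelian subgroups of arbitrarily large rank. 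Discreteness of $\pi(\Gamma)$ is precisely the crux of the theorem: it is equivalent to $\Gamma N$ being closed in $G$, and hence (given compactness of $G/\Gamma$) to $\Gamma\cap N$ being a uniform lattice in $N$. So your dimension count assumes exactly what has to be proved. Note also that the Hirsch-length bookkeeping by itself pushes in the \emph{wrong} direction: from $h(\Gamma)=\dim G$ and $h(\Lambda)\le\dim N$, additivity gives $h(Q)\ge k$, not $\le k$.

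Your closing remark concedes that the crux is the discreteness of $\pi(\Gamma)$, but then claims the Hirsch-length device ``converts'' that question into the dimension equality; it does not --- the conversion only works once the rank bound on $Q$ is available, and that bound \emph{is} the hard content, not bookkeeping. To close the gap you would need one of the genuinely nontrivial classical arguments: Mostow's analysis via the real algebraic (Zariski) hull of $\mathrm{Ad}(\Gamma)$ and its unipotent radical, Raghunathan's inductive cohomological argument, or the semisimple-splitting (nilshadow) construction. The rest of your skeleton is sound and would yield the theorem immediately once discreteness of $\pi(\Gamma)$ is supplied: the reduction to the simply connected case, the observation that $[\mathfrak{g},\mathfrak{g}]\subset\mathfrak{n}$ forces $G/N\cong\R^{k}$, the identity $h(\Gamma)=\mathrm{cd}(\Gamma)=\dim G$ from compactness and asphericity of $G/\Gamma$, the additivity of Hirsch length, and the Mal'cev criterion ``full Hirsch length $\Rightarrow$ lattice'' inside $N$. (Incidentally, Selberg's lemma is not needed: a simply connected solvable Lie group has no nontrivial compact subgroups, so every discrete subgroup is automatically torsion-free.)
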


\begin{theorem}(\cite{Mo, OT} ).
Let $G/\Gamma$ be a solvmanifold that is not
a nilmanifold and denote by $N$ the nilradical of $G$.
Then $\Gamma_N:= \Gamma\cap N$ is a lattice in $N$, $\Gamma N = N\Gamma$ is a closed subgroup in $ G $ and
$G/(N\Gamma)$ is a torus. Therefore, $G/\Gamma $ can be naturally fibred over a non-trivial torus
with a nilmanifold as fiber:
$$
N/\Gamma N = (N\Gamma)/\Gamma \rightarrow G/\Gamma \rightarrow G/(N\Gamma ) = \mathbb T^k
$$
his bundle is called the Mostow bundle.
\end{theorem}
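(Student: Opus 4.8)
The first assertion is precisely the content of the preceding theorem of Mostow--Auslander--Raghunathan, so I take it as given; note that since $N$ is nilpotent the lattice $\Gamma_N=\Gamma\cap N$ is automatically \emph{uniform}, i.e.\ $N/\Gamma_N$ is compact, and that $\Gamma$ itself is uniform in $G$ because every lattice in a solvable Lie group is uniform. The plan is to derive everything else from this cocompactness together with the discreteness of $\Gamma$ and the normality of $N$.

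The main step is to show that $N\Gamma$ is closed; since $N$ is normal, $N\Gamma=\Gamma N$ is automatically a subgroup, so only closedness is at issue. Here I would exploit the cocompactness of $\Gamma_N$ in $N$ to choose a compact set $C\subseteq N$ with $N=\Gamma_N C$. Given a sequence $\gamma_i n_i\to g$ with $\gamma_i\in\Gamma$ and $n_i\in N$, I would write $n_i=\delta_i c_i$ with $\delta_i\in\Gamma_N$ and $c_i\in C$, so that $\gamma_i n_i=(\gamma_i\delta_i)c_i$ with $\gamma_i\delta_i\in\Gamma$. Passing to a subsequence with $c_i\to c\in C$, the elements $\gamma_i\delta_i=(\gamma_i n_i)c_i^{-1}$ converge; as $\Gamma$ is discrete they are eventually a fixed $\gamma\in\Gamma$, whence $g=\gamma c\in\Gamma N$. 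I expect this to be the delicate point, precisely because it is where the hypothesis that $\Gamma_N$ is a \emph{lattice} in $N$ (rather than merely discrete) is indispensable.

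For the torus assertion I would pass to the quotient $\pi\colon G\to G/N$. Since $G$ is solvable and simply connected, $[\mathfrak g,\mathfrak g]\subseteq\mathfrak n$, so $G/N$ is a connected, simply connected abelian Lie group, hence $G/N\cong\mathbb{R}^k$ for some $k$; the hypothesis that $G/\Gamma$ is not a nilmanifold forces $N\neq G$, so $k\ge 1$ and the torus will be non-trivial. Because $N\Gamma$ is closed and contains $N$, its image $\pi(\Gamma)=N\Gamma/N$ is a \emph{closed} subgroup of $\mathbb{R}^k$; being the image of the finitely generated (hence countable) group $\Gamma$, it is countable, and a countable closed subgroup of $\mathbb{R}^k$ is necessarily discrete. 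Finally $G/N\Gamma\cong(G/N)/\pi(\Gamma)$ is a continuous image of the compact space $G/\Gamma$, hence compact, so $\pi(\Gamma)$ is a discrete cocompact subgroup of $\mathbb{R}^k$, i.e.\ a full-rank lattice isomorphic to $\mathbb{Z}^k$. Therefore $G/N\Gamma\cong\mathbb{R}^k/\mathbb{Z}^k=\mathbb{T}^k$.

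The fibration is then formal: the natural projection $G/\Gamma\to G/N\Gamma$ is a locally trivial fibre bundle whose fibre over the base point is $(N\Gamma)/\Gamma\cong N/(N\cap\Gamma)=N/\Gamma_N$, the nilmanifold furnished by the first step. This is the Mostow bundle, and it completes the proof.
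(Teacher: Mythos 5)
The paper offers no proof of this statement to compare against: it is quoted as a background result in the Preliminaries, with citations to Mostow and Oprea--Tralle. Your argument is correct and is essentially the standard proof from those sources: the closedness of $N\Gamma$ via the decomposition $n_i=\delta_i c_i$ (using cocompactness of $\Gamma\cap N$ in $N$ plus discreteness of $\Gamma$) is exactly the classical Mostow argument, and the identification $G/N\cong\mathbb{R}^k$, the discreteness of the closed countable image $\pi(\Gamma)$, and the compactness of $G/(N\Gamma)$ correctly assemble into the torus base and the associated-bundle description of the fiber $(N\Gamma)/\Gamma\cong N/\Gamma_N$. The few facts you assert without proof (simple connectivity of $G/N$, local triviality of $G/\Gamma\rightarrow G/(N\Gamma)$) are standard for closed subgroups of simply connected solvable Lie groups, so no genuine gap remains.
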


\begin{theorem}(\cite{MSR})
(Moore 1963 )\label{Mo-Ragh} Suppose that $\Gamma$ is a
discrete subgroup in a solvable Lie group $G$. Then the following assertions are equivalent:
\begin{enumerate}[label=\roman*)]
\item $\Gamma$ is a lattice in $G$;
\item $\Gamma$ is a uniform lattice in $G$.
\end{enumerate}

\end{theorem}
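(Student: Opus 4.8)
The implication (ii) $\Rightarrow$ (i) is immediate from the definitions given in the introduction, since there a uniform lattice is \emph{defined} to be a lattice whose quotient is in addition compact; hence a uniform lattice is in particular a lattice. All the content therefore lies in the converse (i) $\Rightarrow$ (ii), namely that every lattice in $G$ is automatically cocompact. I would reduce at once to the case where $G$ is connected and simply connected, which is the setting relevant to this paper, and run a fibration argument over the nilradical $N$ of $G$.

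The plan is as follows. Since $\Gamma$ is a lattice, the unimodularity Proposition quoted above gives that $G$ is unimodular, and Mostow's Theorem (the theorem that $\Gamma\cap N$ is a lattice in $N$) gives that $\Gamma_N:=\Gamma\cap N$ is a lattice in the nilradical $N$. Because $N$ is a connected simply connected nilpotent Lie group, every lattice in it is uniform, as recalled in the introduction, so $N/\Gamma_N$ is compact. I would then upgrade this to information about the quotient $G/N$ by invoking the standard lemma that, when $\Gamma\cap N$ is cocompact in a closed normal subgroup $N$, the product $N\Gamma$ is closed in $G$ and the image $\bar\Gamma$ of $\Gamma$ in $G/N$ is discrete; this yields a closed subgroup $N\Gamma$ together with an identification $G/(N\Gamma)\cong (G/N)/\bar\Gamma$. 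Since the derived subgroup of a connected solvable Lie group is nilpotent and hence contained in $N$, the group $G/N$ is abelian, and for $G$ simply connected it is a vector group $\mathbb{R}^k$.

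The crucial step is the descent of finiteness of covolume. Applying Weil's integration formula to the tower $\Gamma\subset N\Gamma\subset G$, all of whose successive quotients carry invariant measures thanks to the unimodularity of $G$, of the nilpotent group $N$, and of the abelian group $G/N$, and using that the fibre $N\Gamma/\Gamma\cong N/\Gamma_N$ carries a finite invariant measure of constant positive mass, I conclude that the finiteness of $\mathrm{vol}(G/\Gamma)$ forces $\mathrm{vol}\big((G/N)/\bar\Gamma\big)<\infty$. A discrete subgroup of $\mathbb{R}^k$ of finite covolume is a lattice of full rank, hence cocompact, so $(G/N)/\bar\Gamma$ is a torus $\mathbb{T}^k$. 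Finally I would assemble the pieces: $G/\Gamma$ fibres over the compact base $G/(N\Gamma)=\mathbb{T}^k$ with compact fibre $N/\Gamma_N$, whence $G/\Gamma$ is compact and $\Gamma$ is a uniform lattice.

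I expect the main obstacle to be precisely the descent step to $G/N$: verifying that $N\Gamma$ is closed and that $\bar\Gamma$ is discrete, and then applying the quotient integration formula cleanly in the presence of the various unimodularities so that finiteness of the total volume genuinely propagates to the base. By contrast, once these structural facts are secured, the reduction to the abelian quotient $\mathbb{R}^k$ and the concluding compactness argument are routine, since for vector groups finite covolume and cocompactness coincide.
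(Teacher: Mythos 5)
First, a point of information: the paper does not prove this statement at all --- it is quoted from Raghunathan's book \cite{MSR} (it is Theorem 3.1 there, stated even more generally for closed subgroups of solvable Lie groups), so there is no proof in the paper to compare yours against. Judged on its own terms, your derivation is internally coherent: granting the paper's Theorem 2.2 (Mostow: $\Gamma\cap N$ is a lattice in the nilradical $N$), the uniformity of lattices in nilpotent groups, and the standard descent lemma (Theorem 1.13 of \cite{MSR}: if $\Gamma\cap N$ is a lattice in the closed normal subgroup $N$, then $N\Gamma$ is closed, the image $\bar\Gamma$ of $\Gamma$ in $G/N$ is discrete, and $\Gamma$ is a lattice in $G$ if and only if $\bar\Gamma$ is a lattice in $G/N$), your chain of deductions --- compact fibre $N/\Gamma_N$, finite covolume of $\bar\Gamma$ in $G/N\cong\mathbb{R}^k$, hence full rank and cocompact, hence $G/\Gamma$ compact as the total space of a bundle with compact base and compact fibre --- is correct. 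In effect you reconstruct the Mostow bundle of the paper's Theorem 2.3 under the finite-volume hypothesis instead of the compactness hypothesis, which is exactly the right geometric picture.

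However, there is a genuine logical defect if this is meant as a proof of the theorem rather than as a derivation of one cited black box from another: the argument is circular with respect to the source it would have to rest on. In \cite{MSR} --- the very reference cited for the statement you are proving --- the uniformity theorem is proved first, by a direct induction on $\dim G$ that never uses the nilradical theorem; Mostow's theorem that $\Gamma\cap N$ is a lattice in $N$ (the paper's Theorem 2.2, Theorem 3.3 in \cite{MSR}) is then deduced from it, and its proof begins precisely by invoking the compactness of $G/\Gamma$. So the key ingredient you rely on is, in its standard proof, a consequence of the statement being proved; unwinding the black boxes yields a circle, and an independent proof must take the other route (induction on dimension, as in \cite{MSR}). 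Two smaller points: the statement concerns an arbitrary solvable Lie group, whereas Theorem 2.2 is stated only for connected $G$ and your reduction to the connected, simply connected case is asserted rather than carried out; and the closedness of $N\Gamma$, discreteness of $\bar\Gamma$, and descent of finite covolume that you correctly identify as the crux are exactly Raghunathan's Theorem 1.13, which the paper nowhere states, so it should be cited explicitly rather than treated as folklore.
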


In distinction from the nilpotent case, criteria for the existence of a lattice
in connected and simply-connected solvable Lie groups have rather cumbersome
formulations
%. The criterion that we present is due to Auslander \cite{Au} and makes
%use of the concept of semisimple splitting. Let $G$ be a connected and simply-connected Lie group. We call a connected
%and simply-connected solvable Lie group $G_s = N_s\rtimes_{\nu_s} T$ a semisimple splitting
%for $G$ if the following hold:
%
%\begin{itemize}
% \item $N_s$ is the nilradical of $G_s$ and called the nilshadow of $G$ and $T\backsimeq\mathbb R^k$ for $k=\dim G_s-\dim N_s$,
%
% \item $T$ acts on $N_s$ via $\nu_s$ by semisimple automorphisms,
%
% \item $G$ is closed normal subgroup of $G_s$ and $G_s=G\rtimes T$
%
% \item $N_s=Z_{N_s}(T)(N_s\cap G)$, where $Z_{N_s}(T)$ is the centralizer of $T$in $N_s$.
%\end{itemize}
%
%One shows that if $G$ is a connected and simply connected solvable Lie group, then $G$ admits a unique semisimple splitting (see \cite{Fer})
%
%
%

Let $G$ be a simply-connected solvable Lie group with nilradical $ N $, then $G$ satisfies the exact sequence
$$
1\rightarrow N\rightarrow G\rightarrow\mathbb R^s\rightarrow 1
$$
We say that $G$ is splittable if the short exact sequence splits, that is, there is a right inverse homomorphism of the projection
$G \rightarrow \mathbb R^s$. This condition is equivalent to the existence of a homomorphism
$\pi : \mathbb R^s \rightarrow Aut(N)$ such that $G$ is isomorphic to the semi-direct product $N\rtimes_\pi\mathbb R^s$.
%In \cite{Wang}, Wang proved the general structure of a lattice of a solvable Lie
%group.
%\begin{theorem} (Wang \cite{Wang}) A group $\Gamma$ is isomorphic to a discrete subgroup
%in a simply-connected Lie group if and only if there is a lattice $\Lambda$ of a simply-connected
%nilpotent Lie group and non-negative integer $k$ such that
%$$
%0 \rightarrow \Lambda \rightarrow \Gamma \rightarrow \mathbb Z^k \rightarrow 0
%$$
%is a short, exact sequence.
%\end{theorem}
%In particular, this implies that, if $G =N\rtimes_\pi \mathbb R^s$ is a simply-connected splittable
%solvable Lie group with nilradical $N $ and if $\Gamma$ is a lattice of $G$, then $\Gamma$ is
%isomorphic to $\Lambda\rtimes_\pi L$ where $\Lambda$ is a lattice of $N$ and $L$ is a lattice of $\mathbb R^s$ such that $\pi(L)\subset Aut(\Lambda)$.
%

\subsection{Semi-direct product of groups}\label{semidirect}

For given (Lie) groups $N, H$ and a (smooth) action $\mu: H \times N \rightarrow N$ by (Lie)
group automorphisms, one defines the semidirect product of $ N$ and $H$ via $\mu$ as
the (Lie) group $G=N\rtimes_\mu H$ with underlying set (manifold) $ N\times H$ and group structure
defined as follows:
$$
\forall (n_i,h_i)\in N\times H,\quad i=1,2\quad (n_1, h_1)(n_2, h_2) =(\mu(h_1,n_1)n_2,h_1h_2).
$$
Note that for $(n,h)\in N\rtimes_\mu H$ we have $(n,h)^{-1}=(\mu(h^{-1},n^{-1}),h^{-1})$.
Equivalently, if $\alpha $ is a smooth morphism
$$
\alpha: H\rightarrow Aut(N),\qquad h\mapsto \alpha(h):=\mu(h,~),
$$
the (Lie) group $G:=N\rtimes_\mu H$ is defined as:
$$
(n_1, h_1)(n_2, h_2) =(n_1\alpha(h_1)n_2,h_1h_2).
$$
Denote the Lie algebras of $N$ and $H$ by
$\frak n$ and $\frak h$ and let $ \phi := (d_{e_N}\mu_1: \frak n\rightarrow {\partial}(\frak h)$, where $\mu_1 : N\rightarrow Aut(\frak h)$ is given by
$$
\mu_1(h) = d_{e_N} \mu(h,~) = Ad^{N\rtimes_\mu H}_h.
$$
The Lie algebra of $G:=N \rtimes_\mu H$ is $\frak g:=\frak n\rtimes_\phi\frak h$ is called semidirect product of $\frak g$ and $\frak h$ via $\phi$. Its underlying vector space is $\frak n\times\frak h$ and the Lie bracket for $(X_i,Y_i)\in\frak n\times \frak h,i=1,2$ is given by
$$
[(X_1,Y_1),(X_2,Y_2)]=([X_1,X_2]_{\frak n}+\phi(Y_1)X_2-\phi(Y_2)X_1, [Y_1,Y_2]_{\frak h}) .
$$

\begin{defn}
Let $G$ be a Lie group with Lie algebra $\frak g$.
\begin{itemize}
\item [(i)] $ G$ and $\frak g$ are called completely solvable if the linear map $ad_X : \frak g \rightarrow \frak g$ has only
real roots for all $X\in\frak g$.

\item [(ii)] If $ G$ is simply-connected and $\exp: \frak g \rightarrow G$ is a diffeomorphism, then $G$ is
called exponential.

\end{itemize}

\end{defn}
A nilpotent Lie group or algebra is completely solvable, and it is easy to see that
completely solvable Lie groups or algebras are solvable. Moreover, any simply-connected completely solvable Lie group is exponential, and any exponential Lie group is solvable.
A connected and simply-connected solvable Lie group $ G$ with
Lie algebra $\frak g$ is exponential if and only if the linear map $ad_X: \frak g \rightarrow \frak g $ has no purely
imaginary roots for all $X\in\frak g$ (for more details see \cite{BCDLRRV, VGS, VS}).

\begin{remark}\label{nilradical}
Let $G$ be a solvable Lie group and $N$ its nilradical.
Then $\dim N \geq \frac{1}{2} \dim G$ (see \cite{BCDLRRV}).
\end{remark}

Let $(\Delta_j)_{1\leq j\leq m}\subset\mathbb R^{n\times n}$ be a set of pairwise commuting matrices and let  $G:=\mathbb R^n\rtimes_\eta\mathbb R^m$ be the group endowed with the law
$$
\begin{array}{ccc}
(x, t) (y, s)& = & (x+\eta( t)y, t+ s) \\
& = & (x+e^{ t\cdot{\Delta}}y, t+ s)
\end{array}
$$
where
$$
{t\cdot{ \Delta}}= \sum_{i=1}^{m}t_i\Delta_i,\quad {\Delta}=(\Delta_1,\dots,\Delta_m),\,\, t=\left(
\begin{array}{c}
t_1 \\
\vdots \\
t_m \\
\end{array}
\right)\in\mathbb R^m,$$
and $ \eta$ is the continuous representation of the topological additive (abelian) group $\mathbb{R}^m$ given by
$$\eta: \mathbb{R}^m\rightarrow GL_n(\mathbb{R}),\quad t\mapsto \eta( t)=e^{t\cdot { \Delta}},$$
here $e^{t\cdot {\Delta}}$ is the matrix exponential of the matrix $t\cdot\Delta$. The inverse of $(x,t)\in G$ is given by
$$ (x,t)^{-1}=(-e^{-t\cdot \Delta}x,-t).
$$
The Lie algebra of $G$ is $\frak g=\mathbb R^n\oplus\mathbb R^m$ and is equipped with the Lie bracket
\[
[(X, t), (Y, s )]=( (t\cdot\Delta) Y- (s\cdot\Delta)X,0),\quad X,Y\in\mathbb R^n, \quad t, s\in\mathbb R^m.
\]
%where
%$$
%(t\cdot\Delta) X=\sum_{j=1}^{n}t_j\Delta_jX,\quad t=\left(
%\begin{array}{c}
%t_1 \\
%\vdots \\
%t_m \\
%\end{array}
%\right)\in\mathbb R^m,X\in\mathbb R^n.
%$$
%Recall that with our notation ${\bf \Delta }=\sum_{i=1}^mt_i\Delta_i$.

%
%
%
%
%Note that since $\frak g=\mathbb R^n\oplus\mathbb R^m$, then $\mathrm{\ad}_X:=\mathrm{\ad}_{(X,0)}$ and $ \mathrm{\ad}_{ t}:=\mathrm{\ad}_{(0, t)}$ can be written in $2\times 2$ matrix form:
%where
% $ N_X:\mathbb R^m\rightarrow \mathbb R^n$ is the linear mapping (which we identify with its matrix) given by
%$$
%N_X(s)=-s\bullet X,\quad s=(s_1,\dots,s_m)\in\mathbb R^m, X\in\mathbb R^n.
%$$
%From these formulae, we derive that $spec(\mathrm{ ad}_{(X, t)}\subset\{0\}\cup spec({ t\cdot\bf\Delta})$ (see \cite{CFT}).
%

From now on we assume that $(\Delta_i)_{1\leq i\leq m}$ is a set of linearly independent diagonal non singular traceless matrices with
$$
\Delta_{i}=\mathcal{D}(d_{1}^{(i)},\ldots,d_{n}^{(i)})
:=\left(\begin{array}{ccc}
d_{1}^{(i)} & & 0 \\
& \ddots & \\
0 & & d_{n}^{(i)} \\
\end{array}
\right)\in GL_{n}(\mathbb{R})\cap \frak{sl}_n(\mathbb R),$$
such that
$$
d_{k}^{(i)}\neq d_{j}^{(i)},\quad i=1,\dots, m,\quad k\neq j=1,\dots,n.
$$

Note that the matrix
$$\Omega=\left(
\begin{array}{ccc}
d_{1}^{(1)} & \ldots & d_{1}^{(m)} \\
\vdots & & \vdots \\
d_{n}^{(1)} & \ldots & d_{n}^{(m)} \\
\end{array}
\right)$$
is injective and hence it has a left inverse which is $ \left({}^{t}\Omega\Omega\right)^{-1}{}^{t}\Omega$.
The Lie algebra $\frak g$ of $G$ is completely solvable since $spec(\mathrm{ad}_{(X, t)})\subset\mathbb R$ for all $(X, t)\in\frak g$ (for more details see \cite{CFT}), and hence the exponential mapping $\exp:\frak g\rightarrow G$ is a $\mathcal C^\infty$ diffeomorphism. The nilradical of $G$ is $\mathbb R^n\times\{0\}$ and by Remark \ref{nilradical}, one has $1\leq m\leq n$. On other hand, the Lie algebra $\frak g$ can be viewed as the Lie algebra with basis $(X_1,\dots,X_n,T_1,\dots,T_m)$ and non trivial brackets
$$
[T_i,X_j]=d_j^{(i)}X_j,\quad i,=1,\dots,m,\quad j=1,\dots,n.
$$
The Lie group $G$ is unimodular since $|\det \mathrm{\mathrm{Ad}}_{(x,t)}|=1$ for all $(x,t)\in G$, and it can be realized in $GL_{n+1}(\mathbb R)$ as the group of upper triangular invertible matrices:
$$
G=\left\{\left( \begin{array}{cccc}
e^{\sum_{i=1}^{m}t_id_1^{(i)}} & 0 &\cdots& x_1 \\
0 & \ddots & 0 & \vdots \\
\vdots & 0 & e^{\sum_{i=1}^{m}t_id_n^{(i)}} & x_n \\
0 & \ldots & 0& 1 \\
\end{array}
\right), \quad t_1,\dots, t_m,x_1,\dots,x_n\in\mathbb R
\right\}
$$
and the corresponding Lie algebra
$$
\frak g=\left\{\left( \begin{array}{cccc}
\sum_{i=1}^{m}t_id_1^{(i)} & 0 &\cdots& x_1 \\
0 & \ddots & 0 & \vdots \\
\vdots & 0 & \sum_{i=1}^{m}t_id_n^{(i)} & x_n \\
0 & \ldots & 0& 0 \\
\end{array}
\right), \quad t_1,\dots, t_m,x_1,\dots,x_n\in\mathbb R
\right\}
$$
%For ${\bf A}=(A_{1},\ldots,A_{m})\in GL(\mathbb{R}^{n})^m$, $k=(k_{1},\ldots,k_{m})\in\mathbb{Z}^{m}$ we denote by
%$$
%{\bf A}^{k}=\prod_{j=1}^{m}A_{j}^{k_{j}}.$$

\section{Lattices in $G$}

In this section, we are concerned with the description of splittable lattices in $G$.

\begin{definition}\label{Gcompatible}
Let $(\sigma,\rho)\in GL_{n}(\mathbb{R})\times GL_{m}(\mathbb{R})$, we shall say that the pair
$(\sigma,\rho)$ is $G-$compatible if
$$\sigma \exp\left(\rho^{(j)}\cdot\Delta\right)\sigma^{-1}\in SL_n(\mathbb Z)$$
for all $j=1,\dots, m$, where $\rho^{(j)}$ denotes the $j^{th}$ column of $\rho$.
\end{definition}
\begin{remark}

\begin{itemize}
\item If $(\sigma,\rho)$ is a $G-$compatible pair, then for any $(\lambda ,q)\in \mathbb R^\star\times\mathbb Z^\star$, the pair $(\lambda \sigma , q\rho)$ is also $G-$compatible.
\item If $(\sigma,\rho)$ is a $G-$compatible pair, then for any $k\in\mathbb Z^m$ one has
$$\sigma \exp\left((\rho\, k)\cdot\Delta\right)\sigma^{-1}\in SL_n(\mathbb Z).$$
%\item If $\Delta_i\in GL_n(\mathbb Q)$ for all $i=1,\dots, m$ then there is alaways a $G-$compatible pair $(\sigma,\rho)$.
\end{itemize}
\end{remark}

\begin{theorem}\label{Glattice}
Let $(\sigma,\rho)$ be a $G$-compatible pair as in Definition \ref{Gcompatible} and denote by $L_{(\sigma,\rho)}:=\sigma^{-1}\mathbb{Z}^{n}\rtimes_{\eta}\rho\mathbb{Z}^{m}$. Then $L_{(\sigma,\rho)}$ is a lattice in $G$.
\end{theorem}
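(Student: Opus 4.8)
The plan is to establish the three properties that together characterize a lattice here: $L_{(\sigma,\rho)}$ is (i) a subgroup of $G$, (ii) discrete, and (iii) cocompact. Once these are in hand, I would invoke the unimodularity of $G$ (verified in Section~2) together with Theorem~\ref{Mo-Ragh}: a discrete cocompact subgroup of the unimodular solvable group $G$ carries a finite invariant measure on the quotient and is therefore a lattice. So the whole content of the theorem is the verification of (i)--(iii).

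First I would check that $L_{(\sigma,\rho)}=\sigma^{-1}\Z^{n}\rtimes_{\eta}\rho\Z^{m}$ is closed under the group law and inversion; the only nontrivial point is the first coordinate. For $(\sigma^{-1}p,\rho k)$ and $(\sigma^{-1}q,\rho k')$ in $L$, the product has first coordinate $\sigma^{-1}p+e^{(\rho k)\cdot\Delta}\sigma^{-1}q$, so I must show $\sigma e^{(\rho k)\cdot\Delta}\sigma^{-1}q\in\Z^{n}$. Since the $\Delta_i$ commute and $\rho k=\sum_j k_j\rho^{(j)}$, one has $(\rho k)\cdot\Delta=\sum_j k_j(\rho^{(j)}\cdot\Delta)$ with pairwise commuting summands, whence
$$
\sigma e^{(\rho k)\cdot\Delta}\sigma^{-1}=\prod_{j=1}^{m}\left(\sigma e^{\rho^{(j)}\cdot\Delta}\sigma^{-1}\right)^{k_j},
$$
a product of integer powers of matrices in $SL_n(\mathbb Z)$, hence itself in $SL_n(\mathbb Z)$. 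This is precisely the second bullet of the Remark following Definition~\ref{Gcompatible}, and it is the one place where $G$-compatibility is essential. The identical computation with $-k$, applied to $(x,t)^{-1}=(-e^{-t\cdot\Delta}x,-t)$, settles inverses, so $L$ is a subgroup. Discreteness is then immediate: in the underlying manifold $\R^{n}\times\R^{m}$ of $G$ the set $L$ is the product $\sigma^{-1}\Z^{n}\times\rho\Z^{m}$ of two full-rank lattices, hence discrete in the product topology.

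The main work, and the step I expect to be the principal obstacle, is cocompactness, since the semidirect structure twists the fibre over the $\R^m$-direction and forbids a naive product fundamental domain. I would exhibit a compact $K\subset G$ with $KL=G$ by a two-stage reduction of an arbitrary $(x,t)$. Right-multiplying by $(0,\rho k)\in L$ leaves $x$ fixed and sends $t\mapsto t+\rho k$; choosing $k=-\lfloor\rho^{-1}t\rfloor$ places the second coordinate in the compact set $t'\in\rho[0,1]^{m}$. Next, right-multiplying by $(\sigma^{-1}q,0)\in L$ sends $x\mapsto x+e^{t'\cdot\Delta}\sigma^{-1}q$, so $x$ may be reduced modulo the lattice $e^{t'\cdot\Delta}\sigma^{-1}\Z^{n}$ into the box $e^{t'\cdot\Delta}\sigma^{-1}[0,1)^{n}$. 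The key observation is that $t'$ now ranges only over the compact set $\rho[0,1]^{m}$, so that
$$
K=\left\{(x,t')\;:\;t'\in\rho[0,1]^{m},\ x\in e^{t'\cdot\Delta}\sigma^{-1}[0,1]^{n}\right\}
$$
is the continuous image of the compact set $\rho[0,1]^{m}\times[0,1]^{n}$ under $(t',u)\mapsto(e^{t'\cdot\Delta}\sigma^{-1}u,t')$, hence compact. By construction every coset meets $K$, so $G=KL$ and $G/L$ is compact.

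Finally, $G$ is unimodular and $L$ is discrete and cocompact, so $L$ is a uniform lattice; equivalently one appeals to Theorem~\ref{Mo-Ragh} to identify uniform lattices with lattices in the solvable group $G$. Therefore $L_{(\sigma,\rho)}$ is a lattice in $G$. I would note in passing that the two-stage reduction exhibits $G/L$ exactly as the Mostow-type fibration $\mathbb{T}^{n}\to G/L\to\mathbb{T}^{m}$ anticipated by the structure theorems of Section~2, with fibre the torus $\R^{n}/\sigma^{-1}\Z^{n}$ and base the torus $\R^{m}/\rho\Z^{m}$.
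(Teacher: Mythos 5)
Your proof is correct, and its overall architecture — subgroup closure via the $G$-compatibility remark, discreteness from the product structure of the underlying set, cocompactness via an explicit reduction, then unimodularity of $G$ to pass from ``discrete and cocompact'' to ``finite covolume'' — matches the paper's. The one genuine difference is in the cocompactness step, and it is worth recording. You reduce an arbitrary $(x,t)$ by multiplying by lattice elements on the \emph{right}, which forces the twisted fundamental set $K=\left\{(x,t'):\ t'\in\rho[0,1]^{m},\ x\in e^{t'\cdot\Delta}\sigma^{-1}[0,1]^{n}\right\}$; the paper instead multiplies by lattice elements on the \emph{left}: it shows $(x,t)(\sigma^{-1}z,\rho y)^{-1}\in L_{(\sigma,\rho)}$, i.e.\ it reduces within right cosets $L_{(\sigma,\rho)}g$, and this choice of side straightens the twist, so the representative $(\sigma^{-1}z,\rho y)$ lies in the plain product box $\sigma^{-1}[0,1)^{n}\times\rho[0,1)^{m}$ — at the cost of invoking $G$-compatibility a second time, to see that the difference $(\sigma^{-1}(\sigma e^{(\rho k)\cdot\Delta}\sigma^{-1})l,\rho k)$ lies in the lattice. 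Consequently your aside that the semidirect structure ``forbids a naive product fundamental domain'' is true only for the side you chose; since $G/\Gamma$ and $\Gamma\backslash G$ are homeomorphic, both reductions prove the theorem. Your version has the small advantages of isolating compatibility entirely in the subgroup step (your reduction uses only that $L_{(\sigma,\rho)}$ is a subgroup) and of making the Mostow-type fibration visible; the paper's has the advantage of a cleaner, product-shaped set of coset representatives. One last nit: Theorem \ref{Mo-Ragh} as stated converts lattices into \emph{uniform} lattices, so it is not quite the statement needed at the end; what you actually use — and also state correctly — is that a discrete cocompact subgroup of a unimodular group has finite covolume.
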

\begin{proof}
Let $(\sigma^{-1}v,\rho(p)), (\sigma^{-1}w,\rho(q))\in L_{(\sigma,\rho)}$ with $\rho:=(\rho_{i,j})_{1\leq i,j\leq m}$.
Using the inverse property and applying the group multiplication rule, we compute
\begin{align*}
(\sigma^{-1}v,\rho(p)) (\sigma^{-1}w,\rho(q))^{-1}= & (\sigma^{-1}v,\rho(p))(-e^{-\rho(q)\cdot\Delta}\sigma^{-1}w,-\rho(q)) \\
= & (\sigma^{-1}v-e^{\rho(p)\cdot\Delta} e^{-\rho(q)\cdot\Delta}\sigma^{-1}w,\rho(p)-\rho(q)) \\
= & (\sigma^{-1}v-e^{\rho(p)\cdot\Delta} e^{-\rho(q)\cdot\Delta}\sigma^{-1}w,\rho(p)-\rho(q)) \\
= & (\sigma^{-1}v- e^{\rho(p-q)\cdot\Delta}\sigma^{-1}w),\rho(p-q))\\
= & (\sigma^{-1}(v-\sigma e^{\rho(p-q)\cdot\Delta}\sigma^{-1}w),\rho(p-q)).
\end{align*}

Thus
$$
(\sigma^{-1}v,\rho(p)) (\sigma^{-1}w,\rho(q))^{-1}\in L_{(\sigma,\rho)}\quad\forall v,w\in\mathbb Z^n, p,q\in\mathbb Z^m.
$$
Thus $L_{(\sigma, \rho)}$ is a discrete subgroup of $G$. We show now that $L_{(\sigma, \rho)}$ is co-compact.

Precisely, we will prove that
$$G/L_{(\sigma, \rho)}=\left\{\overline{(x,t)}:~\ (x,t)\in (\sigma^{-1}[0,1)^{n})\times(\rho[0,1)^{m})\right\},$$
where $\overline{(x,t)}$ denotes the class of $(x,t)$ modulo $L_{(\sigma, \rho)}$.\\
Let $(x,t)$ be an arbitrary element of $G$. Take $y\in[0,1)^{m}$ and $k\in\mathbb{Z}^{m}$ such that
\begin{equation*}
\rho^{-1}t=y+k.
\end{equation*}
Further, let $z\in[0,1)^{n}, l\in\mathbb{Z}^{n}$ be such that
\begin{equation*}
\sigma e^{(-\rho k)\cdot\Delta}x=z+l.
\end{equation*}
Let's rewrite the expression of $(x,t)(\sigma^{-1}z,\rho y)^{-1}$.
%Using the inverse property in the group structure, we have:
%$$(x,t)(\sigma^{-1}z,\rho y)^{-1}=(x,t)(-e^{-\rho y\cdot{\bf \Delta}}\sigma^{-1}z,-\rho y).$$
%Now, applying the group multiplication rule:
%$$(x,t)(\sigma^{-1}z,\rho y)^{-1}=(x-e^{(t-\rho y)\cdot{\bf \Delta}}\sigma^{-1}z,\rho(\rho^{-1}t- y)).$$
We have
\begin{align*}
(x,t)(\sigma^{-1}z,\rho y)^{-1}=&(x,t)(-e^{-\rho y\cdot\Delta}\sigma^{-1}z,-\rho y) \\
=&(x-e^{(t-\rho y)\cdot\Delta}\sigma^{-1}z,\rho(\rho^{-1}t- y)).
\end{align*}
Since $t-\rho y=\rho k$, this reduces to
$$(x,t)(\sigma^{-1}z,\rho y)^{-1}=(x-e^{(\rho k)\cdot \Delta}\sigma^{-1}z,\rho k).$$
Using the expression $x=e^{(\rho k)\cdot\Delta}\sigma^{-1}(z+l)$, we have
$$(x,t)(\sigma^{-1}z,\rho y)^{-1}=(e^{(\rho k)\cdot \Delta}\sigma^{-1}l,\rho k).$$
The right-hand side can be rewritten as:
$$(\sigma^{-1}\sigma e^{(\rho k)\cdot\Delta}\sigma^{-1}l,\rho k)\in L_{(\sigma, \rho)},$$
since $l\in\mathbb Z^{n}$ and $k\in\mathbb Z^{m}$.
It follows that $$\overline{(x,t)}=\overline{(\sigma^{-1}z,\rho y)}$$

This shows that
$$
G/L_{(\sigma, \rho)}=\left\{\overline{(x,t)}:~\ (x,t)\in (\sigma^{-1}[0,1)^{n})\times(\rho[0,1)^{m})\right\}$$
and hence $L_{(\sigma, \rho)}$ is co-compact. Therefore, $L_{(\sigma, \rho)}$ is a lattice in $G$.
\end{proof}

Let $S_{n}$ be the group of permutations of the set $\{1,\ldots,n\}$. If $\tau\in S_{n}$ we denote by $P_{\tau}$ the permutation matrix obtained from the identity matrix $I_{n}$ by changing, for all $1\leq i\leq n$, the $i^{th}$ row by the $\tau^{-1}(i)^{th}$ row.
\begin{proposition}\label{lem}
Let $\phi$ be an endomorphism of the group $G$ and let $\alpha:\mathbb{R}^{n}\longrightarrow\mathbb{R}^{n}$, $\beta:\mathbb{R}^{m}\longrightarrow\mathbb{R}^{n}$, $\gamma:\mathbb{R}^{n}\longrightarrow\mathbb{R}^{m}$ and $\delta:\mathbb{R}^{m}\longrightarrow\mathbb{R}^{m}$ be defined as follows
$$
\begin{array}{cc}
\alpha=\pi_{{}_{\mathbb{R}^{n}}}\circ\phi\circ j_{{}_{\mathbb{R}^{n}}} & \beta=\pi_{{}_{\mathbb{R}^{n}}}\circ\phi\circ j_{{}_{\mathbb{R}^{m}}} \\
\gamma=\pi_{{}_{\mathbb{R}^{m}}}\circ\phi\circ j_{{}_{\mathbb{R}^{n}}} & \delta=\pi_{{}_{\mathbb{R}^{m}}}\circ\phi\circ j_{{}_{\mathbb{R}^{m}}},
\end{array}
$$
where $\pi$ (resp. $j$) denotes the projection (resp. the injection) onto (resp. from) the indicated parts of the semidirect product.\\
Then $\phi$ is an automorphism of $G$ if and only if the following conditions hold:
\begin{enumerate}[label=\arabic*)]
\item $\alpha=P_{\tau}\mathcal{D}(c_{1},\ldots,c_{n})$
for some non zero real numbers $c_{1},\ldots,c_{n}$, $\tau\in S_{n}$ and $P_\tau$ is the permutation matrix corresponding to $\tau$.
\item $\gamma=0$,
\item $\delta=\left({}^{t}\Omega \Omega \right)^{-1}{}^{t}\Omega P_{\tau}\Omega $,
\item $\beta(t)=\sum_{k=0}^{\infty}\frac{(\delta(t)\cdot\Delta)^k}{(k+1)!}Ut$, for some $U\in\mathbb R^{n\times m}$.
\end{enumerate}
\end{proposition}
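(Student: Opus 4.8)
The plan is to reduce $\phi$ to its four block components and then read off the four conditions from the homomorphism law. Since every element factors canonically as $(x,t)=(x,0)(0,t)$ and $\phi$ is a homomorphism, I would first record
\[
\phi(x,t)=\phi(x,0)\,\phi(0,t)=\bigl(\alpha(x)+e^{\gamma(x)\cdot\Delta}\beta(t),\ \gamma(x)+\delta(t)\bigr),
\]
so that $\phi$ is completely encoded by $(\alpha,\beta,\gamma,\delta)$. The subgroups $\mathbb{R}^n\times\{0\}$ (the nilradical) and $\{0\}\times\mathbb{R}^m$ are abelian; restricting $\phi$ to each and comparing the $\mathbb{R}^m$-components shows at once that $\gamma$ and $\delta$ are additive, hence (being smooth) linear, and I will identify them with matrices from here on.

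Next I would expand the homomorphism identity $\phi((x,t)(y,s))=\phi(x,t)\phi(y,s)$ in coordinates. Comparing the $\mathbb{R}^m$-components and using linearity of $\gamma,\delta$ collapses everything to $\gamma(e^{t\cdot\Delta}y)=\gamma(y)$ for all $t,y$; differentiating at $t=0$ in the $i$-th direction gives $\gamma\Delta_i=0$, and since each $\Delta_i$ is invertible this forces $\gamma=0$, which is condition 2). With $\gamma=0$ the map becomes $\phi(x,t)=(\alpha x+\beta(t),\delta t)$ with $\alpha$ now linear, and the first-coordinate part of the homomorphism identity separates (setting $y=0$, then cancelling) into two master relations: the intertwining relation $\alpha\,e^{t\cdot\Delta}=e^{\delta(t)\cdot\Delta}\alpha$ and the cocycle relation $\beta(t+s)=\beta(t)+e^{\delta(t)\cdot\Delta}\beta(s)$.

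For conditions 1) and 3) I would differentiate the intertwining relation at $t=0$, obtaining $\alpha\Delta_i=\Lambda_i\alpha$ with $\Lambda_i:=\sum_k\delta_{ki}\Delta_k$ for every $i$. Since $\Delta_i$ and $\Lambda_i$ are diagonal, reading this entrywise gives $\alpha_{pq}\bigl(\Omega_{qi}-(\Omega\delta)_{pi}\bigr)=0$, so an entry $\alpha_{pq}$ can be nonzero only when row $q$ of $\Omega$ equals row $p$ of $\Omega\delta$. The hypothesis that each $\Delta_i$ has pairwise distinct diagonal entries makes the rows of $\Omega$ pairwise distinct, so for each $p$ there is at most one admissible $q$; invertibility of $\alpha$ (from $\phi$ being an automorphism) then forces exactly one nonzero entry per row and column, i.e. $\alpha=P_\tau\mathcal{D}(c_1,\dots,c_n)$ with all $c_i\neq0$, which is condition 1). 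The surviving entries encode precisely $P_\tau\Omega=\Omega\delta$; left-multiplying by the left inverse $({}^{t}\Omega\Omega)^{-1}{}^{t}\Omega$ of the injective matrix $\Omega$ yields condition 3). For condition 4) I would restrict the cocycle relation to a ray $r\mapsto\beta(rt)$, which then satisfies $\frac{d}{dr}\beta(rt)=e^{r\delta(t)\cdot\Delta}Ut$ with $U:=D\beta(0)$; integrating from $0$ to $1$ produces exactly $\beta(t)=\sum_{k\ge0}\frac{(\delta(t)\cdot\Delta)^k}{(k+1)!}Ut$.

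For the converse I would run the same computations backwards: conditions 1) and 3) give $\alpha\Delta_i=\Lambda_i\alpha$ entrywise and, after exponentiation, recover the intertwining relation; condition 4) is checked against the cocycle relation; together with $\gamma=0$ these make $\phi$ a homomorphism, while invertibility of $\alpha$ (nonzero $c_i$) and of $\delta$ (which follows from $\Omega\delta=P_\tau\Omega$, with $\Omega$ injective and $P_\tau$ invertible) make it bijective. I expect the main obstacle to be the rigidity argument of the third paragraph: converting the single differentiated relation $\alpha\Delta_i=\Lambda_i\alpha$ into the monomial structure of $\alpha$ rests essentially on the distinctness of the rows of $\Omega$, and one must verify with care that the resulting assignment $p\mapsto q$ is a genuine permutation rather than a partial matching. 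Verifying the cocycle relation for the explicit form of $\beta$ in the converse direction is the other delicate point, and is where the precise compatibility between $U$ and $\delta$ must be tracked.
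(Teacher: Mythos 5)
Your forward direction is correct and follows the same skeleton as the paper's proof (factor $\phi(x,t)=\phi(x,0)\phi(0,t)$, extract the intertwining relation $\alpha\,\eta(t)=\eta(\delta(t))\alpha$ and the cocycle relation for $\beta$, then integrate the cocycle along a ray), but two steps are carried out by genuinely different means. For $\gamma=0$ the paper argues structurally: $\mathbb{R}^{n}\times\{0\}$ is the nilradical of $G$, hence characteristic, so any automorphism preserves it, which forces $\gamma=0$ and simultaneously hands them $\alpha\in Aut(\mathbb{R}^{n})$ and $\delta\in Aut(\mathbb{R}^{m})$ for free. You instead extract $\gamma(e^{t\cdot\Delta}y)=\gamma(y)$ from the $\mathbb{R}^{m}$-component of the homomorphism identity and kill $\gamma$ by differentiating and invoking invertibility of the $\Delta_{i}$; this is more elementary, and it proves something stronger (every endomorphism of $G$ has $\gamma=0$), at the cost of having to establish invertibility of $\alpha$ and $\delta$ separately. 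For the monomial structure of $\alpha$, you work infinitesimally ($\alpha\Delta_{i}=\Lambda_{i}\alpha$, read entrywise via distinctness of the rows of $\Omega$), where the paper works at the group level with eigenvalues and eigenvectors of $e^{\Delta_{k}}$; these are equivalent arguments, and your entrywise bookkeeping $\alpha_{pq}\bigl(\Omega_{qi}-(\Omega\delta)_{pi}\bigr)=0$ is accurate.

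One caution about your converse, which you partly anticipated. Your claim that conditions 1) and 3) ``give $\alpha\Delta_{i}=\Lambda_{i}\alpha$'' is not automatic when $m<n$: condition 3) is only the left-inverse formula $\delta=({}^{t}\Omega\Omega)^{-1}{}^{t}\Omega P_{\tau}\Omega$, and this recovers the needed identity $\Omega\delta=P_{\tau}\Omega$ only if the columns of $P_{\tau}\Omega$ lie in the column space of $\Omega$ (already for $m=1$, $n=3$, $\Omega={}^{t}(1,2,-3)$ and $\tau$ the transposition of the first two indices, it fails). Likewise, for $m\geq 2$ an arbitrary $U$ in condition 4) does not satisfy the cocycle identity: symmetrizing $\beta(t+s)=\beta(s+t)$ forces each row of $U$ to be proportional to the corresponding row of $\Omega\delta$, which is exactly the ``compatibility between $U$ and $\delta$'' you flagged. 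These are really defects of the proposition's statement, and the paper's own converse (dismissed as ``follows from the preceding calculations'') is no more careful; but be aware that ``running the computations backwards'' genuinely breaks at these two points rather than merely requiring care.
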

\begin{proof}
Let $\phi$ be a morphism of $G$, thus we can write
$$\phi(x,0)=(\alpha(x),\gamma(x)) \text { and } \phi(0,t)= (\beta(t),\delta(t)),$$
with
$$
\begin{array}{cc}
\alpha=\pi_{{}_{\mathbb{R}^{n}}}\circ\phi\circ j_{{}_{\mathbb{R}^{n}}} & \beta=\pi_{{}_{\mathbb{R}^{n}}}\circ\phi\circ j_{{}_{\mathbb{R}^{m}}} \\
\gamma=\pi_{{}_{\mathbb{R}^{m}}}\circ\phi\circ j_{{}_{\mathbb{R}^{n}}} & \delta=\pi_{{}_{\mathbb{R}^{m}}}\circ\phi\circ j_{{}_{\mathbb{R}^{m}}},
\end{array}
$$
Now since $(x,t)=(x,0)(0,t)$ and $\phi$ is a morphism of $G$, it derives that
\begin{equation}\label{law}\phi(x,t)=(\alpha(x)+e^{(\gamma(x)\cdot\Delta)}\beta(t),\gamma(x)+\delta(t)).
\end{equation}
On the other hand, since $(0,t)(0,s)=(0,t+s)$ and $\phi\in Hom (G)$ therefore

\begin{equation}\label{betadelta}
\left\{
\begin{array}{lll}
\delta(t+s) &= & \delta(t)+\delta(s) \\
\beta(t+s)&= & \beta(t)+e^{\delta(t)\cdot\Delta}\beta(s)
\end{array}
\right.
\end{equation}

From this, we see that $\delta\in End(\mathbb R^m)$.
% and hence it exists $M\in\mathbb R^{m\times m}$ such that
%\begin{equation}\label{delta,gamma}
%\delta(t)=Mt \quad\forall t\in\mathbb R^m.
%\end{equation}
Similarly, since $(x,0)(y,0)=(x+y,0)$ we get
\begin{equation}\label{betadelta}
\left\{
\begin{array}{lll}
\gamma(x+y) &= & \gamma(x)+\gamma(y) \\
\alpha(x+y)&= & \alpha(x)+e^{\gamma(x)\cdot\Delta}\gamma(y)
\end{array}
\right.
\end{equation}
From this, it derives that $\gamma\in Hom(\mathbb R^n,\mathbb R^m)$.
% and hence it exists $N\in\mathbb R^{m\times n}$ such that
%\begin{equation}\label{delta,gamma}
%\gamma(x)=Nx \quad\forall x\in\mathbb R^n.
%\end{equation}
At the end of this step, we can write
\begin{align*}
\phi(x,t)&=\phi(x,0)\phi(0,t)\\
%&=(\alpha(x),Nx)(\beta(t),Mt)\\
&=(\alpha(x)+e^{\gamma(x)\cdot\Delta}\beta(t),\gamma(x)+\delta(t)).
\end{align*}

Suppose that $\phi$ is an automorphism of $G$. Note that $\mathbb{R}^{n}\times\{0_{\mathbb{R}^{m}}\}$ is characteristic in $G$ since it is the nilradical of $G$, therefore $\alpha$ is an automorphism of $\mathbb{R}^{n}$ and $\gamma=0$. Furthermore, $\delta$ is an automorphism of $\mathbb{R}^{m}$.
% so $\delta(t_{1},\ldots,t_{m})=M{}^{t}(t_{1},\ldots,t_{m})$ for some $M\in GL(\R^{m})$.\\

Thus for every $(x,t)\in G$ we have $\phi(x,t)=(\alpha(x)+\beta(t),\delta(t))$. Therefore
\begin{align*}
\phi((x,t)(y,s))=&\phi(x+\eta(t)y,t+s)\\
=&(\alpha(x)+\alpha(\eta(t)y)+\beta(t+s),\delta(t+s))
\end{align*}
On other hand one has
\begin{align*}
\phi(x,t)\phi(y,s)=&(\alpha(x)+\beta(t),\delta(t))(\alpha(y)+\beta(s),\delta(s))\\
=&(\alpha(x)+\beta(t)+\eta(\delta(t))(\alpha(y)+\beta(s)),\delta(t)+\delta(s)).
\end{align*}
The homomorphism condition implies

\begin{align}\label{eqq1}
\alpha(\eta(t)y)+\beta(t+s)
=&\beta(t)+\eta(\delta(t))(\alpha(y))+\eta(\delta(t))(\beta(s)).
\end{align}
Setting $t=0$ in (\ref{eqq1}), we get (since $\eta(0)=Id_{{}_{\mathbb{R}^{n}}}$ and $\delta(0)=0$)
$$\beta(0)=0.$$
Setting $s=0$ in (\ref{eqq1}), we get
\begin{align}\label{eqq2}
\alpha\eta(t)=&\eta(\delta(t))\alpha,\quad \text{for all } t\in\mathbb R^m.
\end{align}
Then the equality (\ref{eqq1}) becomes
\begin{align}\label{eqq3}
\beta(t+s)=&\beta(t)+\eta(\delta(t))(\beta(s)), \quad \text{for all } t, s\in\mathbb R^m.
\end{align}
Setting $t=e_{k}=(0,\ldots,1,\ldots,0)$ (the $k^{th}$ element of the standard basis of $\mathbb{R}^{m}$) in (\ref{eqq2}), we get
\begin{align}\label{eqq4}
\alpha e^{\Delta_{k}}=& e^{\delta^{(k)}\cdot{\Delta}}\alpha,\quad(\text{where }\delta^{(k)}=\delta e_{k})
\end{align}
it follows that
\begin{align*}
\alpha \exp\left(\Delta_{k}\right)\alpha^{-1}=& \exp\left(\sum_{l=1}^{m}\delta{{}_{lk}}\Delta_{l}\right),
\end{align*}
hence for every $1\leq i\leq n$,
\begin{align}\label{eqq5}
\sum_{l=1}^{m}d_{i}^{(l)}\delta{{}_{lk}}=&d_{\tau(i)}^{(k)},
\end{align}
for some $\tau\in S_{n}$.
In fact, the equality (\ref{eqq4}) becomes
\begin{align}\label{eqq6}
\alpha e^{\Delta_{k}}=& e^{\Delta_{k,\tau}}\alpha
\end{align}
where $$\Delta_{k,\tau}=\mathcal{D}(d^{(k)}_{\tau(1)},\ldots,d^{(k)}_{\tau(n)}).$$
Let $(\varepsilon_{1},\ldots,\varepsilon_{n})$ the standard basis of $\mathbb{R}^{n}$. From (\ref{eqq6}) we get
$$e^{\Delta_{k,\tau}}\alpha \varepsilon_{i}=e^{d^{(k)}_{i}}\alpha \varepsilon_{i},\quad\text{ for all } i=1,\dots, n.$$
It follows that $\alpha \varepsilon_{i}$ is an eigenvector of $e^{\Delta_{k,\tau}}$ with respect to the eigenvalue $e^{d^{(k)}_{i}}$, so, since the $d^{(k)}_{l},\ 1\leq l\leq n$ are pairwise distinct,
\begin{align}\label{eqq7}
\alpha \varepsilon_{i}=c_{i}\varepsilon_{\tau^{-1}(i)}.
\end{align}
for some non zero real number $c_i$. Hence $\tau$ is independent of $k$ and
$$\alpha_{ij}=\left\{\begin{array}{ccc}
c_{\tau(i)} & if & j=\tau(i) \\
0 & if & j\neq\tau(i).
\end{array}
\right.
$$
Here the reals $c_1,\dots, c_n$ are non zero since $\alpha\in Aut(\mathbb R^{n})$. Thus $$\alpha=P_{\tau}\mathcal{D}(c_{1},\ldots,c_{n}).$$
From (\ref{eqq5}), if $A$ denotes the matrix of $\alpha$ with respect to the standard basis of $\mathbb R^n$, we get
$$\Omega A=P_{\tau}\Omega, $$
and hence $$A=\left({}^{t}\Omega \Omega \right)^{-1}{}^{t}\Omega P_{\tau}\Omega .$$
Recall that $\beta$ satisfies
$$
\beta(t+s)=\beta(t)+\eta(\delta(t))(\beta(s)),\quad \text{for all } t, s\in\mathbb R^m,
$$
that is,
\begin{align*}
\beta(t+s)-\beta(t) =& \eta(\delta(t))(\beta(s)) \\
=& \eta(\delta(t))(\beta(s)-\beta(0))
\end{align*}
Therefore we get
\begin{align*}
D\beta(t)=&\eta(\delta(t))D\beta(0)\\
=&\exp\left(\delta(t)\cdot\Delta\right)D\beta(0),\quad t\in\mathbb{R}^{m}.
\end{align*}
On the other hand, the Taylor formula gives
$$
\beta(t)=\beta(0)+\int_0^1D\beta(\lambda t)t{\mathrm d}\lambda=\left(\int_0^1 e^{\lambda\delta(t)\cdot\Delta}{\mathrm d}\lambda\right)D\beta(0)t.
$$
Now we shall compute $\int_0^1 e^{\lambda\delta(t)\cdot\Delta}d\lambda$, to this end recall that
$$
e^{\lambda\delta(t)\cdot\Delta}=\sum_{k=0}^{\infty}\frac{\lambda^k}{k!}(\delta(t)\cdot\Delta)^k
$$ and hence
$$
\int_0^1 e^{\lambda\delta(t)\cdot\Delta}\rm d\lambda=\sum_{k=0}^{\infty}\left(\int_0^1\frac{\lambda^k}{k!}
\rm d\lambda\right)(\delta(t)\cdot\Delta)^k=\sum_{k=0}^{\infty}
\frac{(\delta(t)\cdot\Delta)^k}{(k+1)!}.
$$
Then
$$
\beta(t)=\sum_{k=0}^{\infty}\frac{(\delta(t)\cdot\Delta)^k}{(k+1)!}Ut,
$$
with $U=D\beta(0)\in\mathbb R^{n\times m}$.

The converse follows from the preceding calculations.
\end{proof}
\begin{theorem}
Let $(\sigma,\rho)$ and $(\nu,\varrho)$ be two $G$-compatible pairs.
The lattices $L_{(\sigma,\rho)}=\sigma^{-1}\mathbb{Z}^{n}\rtimes_{\eta}\rho\mathbb{Z}^{m}$ and $L_{(\nu,\varrho)}=\nu^{-1}\mathbb{Z}^{n}\rtimes_{\eta}\varrho\mathbb{Z}^{m}$ differ by an automorphism $\phi:=(\alpha+\beta,\delta)$ of $G$ if and only if
$$
\nu\alpha\sigma^{-1}\in GL_{n}(\mathbb{Z}) \quad \text{and} \quad
\varrho^{-1}\delta\rho\in GL_{m}(\mathbb{Z}).
$$

\end{theorem}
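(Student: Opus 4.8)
The plan is to unwind the phrase ``differ by an automorphism'' as the single equation $\phi(L_{(\sigma,\rho)})=L_{(\nu,\varrho)}$, and to exploit the fact that, by Proposition \ref{lem}, every automorphism of $G$ splits as $\phi(x,t)=(\alpha(x)+\beta(t),\delta(t))$ with $\alpha\in GL_n(\mathbb{R})$, $\delta\in GL_m(\mathbb{R})$ and $\beta$ a \emph{section} term. The two desired conditions involve only the linear parts $\alpha$ and $\delta$, so the strategy is to separate the roles of $\alpha$, $\delta$ and $\beta$ by testing $\phi$ against the two canonical pieces of $G$ that they govern: the nilradical $N=\mathbb{R}^n\times\{0\}$ and the abelian quotient $G/N\cong\mathbb{R}^m$.

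For necessity, I would first use that $N$ is the nilradical, hence characteristic, so $\phi(N)=N$ and $\phi|_N=\alpha$. Since $\rho$ is invertible, $L_{(\sigma,\rho)}\cap N=\sigma^{-1}\mathbb{Z}^n\times\{0\}$ and likewise $L_{(\nu,\varrho)}\cap N=\nu^{-1}\mathbb{Z}^n\times\{0\}$; bijectivity of $\phi$ gives $\phi(L_{(\sigma,\rho)})\cap N=\phi(L_{(\sigma,\rho)}\cap N)$, so the hypothesis forces $\alpha(\sigma^{-1}\mathbb{Z}^n)=\nu^{-1}\mathbb{Z}^n$, that is $\nu\alpha\sigma^{-1}\in GL_n(\mathbb{Z})$. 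Next I would pass to the quotient: the projection $\pi\colon (x,t)\mapsto t$ is a homomorphism $G\to\mathbb{R}^m$ with kernel $N$, and $\pi\circ\phi=\delta\circ\pi$. Applying $\pi$ to $\phi(L_{(\sigma,\rho)})=L_{(\nu,\varrho)}$ yields $\delta(\rho\mathbb{Z}^m)=\varrho\mathbb{Z}^m$, i.e. $\varrho^{-1}\delta\rho\in GL_m(\mathbb{Z})$. Note that $\beta$ is invisible to both tests, which is precisely why no condition on it appears.

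For sufficiency I would argue by direct computation. Using $\phi(\sigma^{-1}v,\rho p)=(\alpha\sigma^{-1}v+\beta(\rho p),\,\delta\rho p)$ for $v\in\mathbb{Z}^n,\ p\in\mathbb{Z}^m$: the second coordinate equals $\varrho\,(\varrho^{-1}\delta\rho)\,p\in\varrho\mathbb{Z}^m$ by the second hypothesis, while $\alpha\sigma^{-1}v=\nu^{-1}(\nu\alpha\sigma^{-1})v\in\nu^{-1}\mathbb{Z}^n$ by the first; the integer-invertibility of both $\nu\alpha\sigma^{-1}$ and $\varrho^{-1}\delta\rho$ then gives the reverse inclusion, hence surjectivity onto $L_{(\nu,\varrho)}$.

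The hard part will be the section term $\beta$. The computation above places $\phi(\sigma^{-1}v,\rho p)$ inside $L_{(\nu,\varrho)}$ only once one also knows $\beta(\rho\mathbb{Z}^m)\subseteq\nu^{-1}\mathbb{Z}^n$, since already $\phi(0,\rho p)=(\beta(\rho p),\delta\rho p)$ must lie in $L_{(\nu,\varrho)}$. I would isolate this as the decisive point. The natural way around it is to observe that the equivalence of the two lattices is governed entirely by the linear data $\alpha,\delta$: by Proposition \ref{lem}(4) the choice $U=0$ produces $\beta\equiv 0$ and an honest automorphism with the prescribed $\alpha$ and $\delta$, so the two displayed conditions are exactly what makes $(\alpha,\delta)$ carry $L_{(\sigma,\rho)}$ bijectively onto $L_{(\nu,\varrho)}$. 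Thus $\beta$ is either taken to vanish or, for the given $\phi$, must be verified to satisfy $\beta(\rho\mathbb{Z}^m)\subseteq\nu^{-1}\mathbb{Z}^n$; pinning down this last inclusion from the explicit series for $\beta$ together with the $G$-compatibility of $(\sigma,\rho)$ and $(\nu,\varrho)$ is the one genuinely delicate step, after which $\beta$ merely shifts generators within the lattice and does not affect the conclusion.
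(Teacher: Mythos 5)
Your proof is correct and follows essentially the same route as the paper: necessity via the nilradical restriction ($\phi|_N=\alpha$) and the quotient projection ($\pi\circ\phi=\delta\circ\pi$), sufficiency via the direct computation $\alpha(\sigma^{-1}\mathbb{Z}^n)=\nu^{-1}(\nu\alpha\sigma^{-1}\mathbb{Z}^n)=\nu^{-1}\mathbb{Z}^n$ and $\delta(\rho\mathbb{Z}^m)=\varrho(\varrho^{-1}\delta\rho\,\mathbb{Z}^m)=\varrho\mathbb{Z}^m$, with the $\beta$-ambiguity resolved exactly as the paper does, namely by taking $U=0$ so that the automorphism realizing the equivalence is $\phi:=(\alpha,\delta)$ with $\beta\equiv 0$. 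Your explicit observation that for a \emph{given} automorphism with $\beta\neq 0$ one would additionally need $\beta(\rho\mathbb{Z}^m)\subseteq\nu^{-1}\mathbb{Z}^n$ is a genuine looseness in the theorem's phrasing that the paper passes over silently.
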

\begin{proof}Assume that there is an automorphism $\phi$ of $G$ with $\phi=(\alpha+\beta,\delta)$ and such that
$\phi(L_{(\sigma,\rho)})=L_{(\nu,\varrho)}$.
Then
$$\alpha(\sigma^{-1}\mathbb{Z}^{n})=\nu^{-1}\mathbb{Z}^{n}\quad\text{and}\quad\delta(\rho\mathbb{Z}^{m})
=\varrho\mathbb{Z}^{m},$$
it follows that
$$\nu\alpha\sigma^{-1}\in SL_{n}(\mathbb{Z})\quad\text{and}\quad\varrho^{-1}\delta\rho\in SL_{m}(\mathbb{Z})$$
Conversely, suppose that there exist $\phi:=(\alpha,\delta)\in Aut(G)$ such that
$$
\nu\alpha\sigma^{-1}\in SL_{n}(\mathbb{Z}) \quad
\text{and}\quad
\varrho^{-1}\delta\rho\in SL_{m}(\mathbb{Z})
$$
Then we have
\begin{align*}
\alpha(\sigma^{-1}\mathbb{Z}^{n})= & \nu^{-1}(\nu\alpha\sigma^{-1}\mathbb{Z}^{n}) \\
= & \nu^{-1}\mathbb{Z}^{n},
\end{align*}
and
\begin{align*}
\delta(\rho\mathbb{Z}^{m})= & \varrho(\varrho^{(-1)} \delta\rho \mathbb{Z}^{m}) \\
= &\varrho\mathbb{Z}^{m}.
\end{align*}
Thus
$$\phi(L_{(\sigma,\rho)})=L_{(\nu,\varrho)}$$
\end{proof}

Let $\Gamma$ and $\Gamma'$ be lattices of $G$. Recall that they are said to be commensurable if $[\Gamma :\Gamma \cap\Gamma']<\infty$
and $[\Gamma' : \Gamma \cap\Gamma'] <\infty$ (see e.g., \cite{VGS}).
\begin{proposition}\label{Gcommensurable1}
Let $(\sigma ,\rho)$ and $(\nu,\varrho)$ be two $ G$-compatible pairs.
The lattices $L_{(\sigma ,\rho)}=\sigma^{-1}\mathbb{Z}^{n}\rtimes_\eta\rho\mathbb{Z}^{m}$ and $L_{(\nu,\varrho)}=\nu^{-1}\mathbb{Z}^{n}\rtimes_\eta\varrho\mathbb{Z}^{m}$ are commensurables if and only if
$$\left(\mathrm{rank}_{\mathbb{Z}}(\sigma^{-1}\mathbb{Z}^{n}\cap\nu^{-1}\mathbb{Z}^{n}),
\mathrm{rank}_{\mathbb{Z}}(\rho\mathbb{Z}^{m}\cap\varrho\mathbb{Z}^{m})\right)=(n,m).$$
%the ranks of the abelian groups $\nu\sigma^{-1}\mathbb{Z}^{n}\cap\mathbb{Z}^{n}$ and
%$\varrho^{-1}\rho\mathbb{Z}^{m}\cap\mathbb{Z}^{m}$ equal to $n$ and $m$ respectively.
\end{proposition}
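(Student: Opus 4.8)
\emph{The plan} is to reduce the commensurability question to two independent finite-index questions, one in the nilradical direction $\mathbb{R}^{n}$ and one in the quotient direction $\mathbb{R}^{m}$, by exploiting the fact that the underlying \emph{sets} of the two lattices are honest Cartesian products.

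First I would record that, as subsets of $G=\mathbb{R}^{n}\times\mathbb{R}^{m}$, each lattice is literally a product: $L_{(\sigma,\rho)}$ has underlying set $(\sigma^{-1}\mathbb{Z}^{n})\times(\rho\mathbb{Z}^{m})$ and $L_{(\nu,\varrho)}$ has underlying set $(\nu^{-1}\mathbb{Z}^{n})\times(\varrho\mathbb{Z}^{m})$, only the group law being twisted by $\eta$. Hence the set-theoretic, and therefore the group-theoretic, intersection is
$$
L_{(\sigma,\rho)}\cap L_{(\nu,\varrho)}=\Lambda_{1}\times\Lambda_{2},\qquad \Lambda_{1}:=\sigma^{-1}\mathbb{Z}^{n}\cap\nu^{-1}\mathbb{Z}^{n},\quad \Lambda_{2}:=\rho\mathbb{Z}^{m}\cap\varrho\mathbb{Z}^{m}.
$$
Being an intersection of subgroups, this is automatically a subgroup of $G$, so no $G$-compatibility needs to be checked here.

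Next I would compute the two relevant indices. Let $\pi:G\to\mathbb{R}^{m}$, $(x,t)\mapsto t$, be the projection modulo the nilradical $\mathbb{R}^{n}\times\{0\}$, which is a homomorphism. Writing $H:=L_{(\sigma,\rho)}\cap L_{(\nu,\varrho)}$ and $N:=\sigma^{-1}\mathbb{Z}^{n}\times\{0\}$, the product description shows $\pi(H)=\Lambda_{2}$ with kernel exactly $\Lambda_{1}$, giving a commutative ladder of short exact sequences
$$
1\to\Lambda_{1}\to H\to\Lambda_{2}\to 1 \quad\text{and}\quad 1\to\sigma^{-1}\mathbb{Z}^{n}\to L_{(\sigma,\rho)}\to\rho\mathbb{Z}^{m}\to 1
$$
with the vertical maps being inclusions. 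From the tower $H\le HN\le L_{(\sigma,\rho)}$ together with the second isomorphism theorem ($[HN:H]=[N:N\cap H]$), the index factors multiplicatively,
$$
[L_{(\sigma,\rho)}:H]=[\sigma^{-1}\mathbb{Z}^{n}:\Lambda_{1}]\,[\rho\mathbb{Z}^{m}:\Lambda_{2}],
$$
and symmetrically $[L_{(\nu,\varrho)}:H]=[\nu^{-1}\mathbb{Z}^{n}:\Lambda_{1}]\,[\varrho\mathbb{Z}^{m}:\Lambda_{2}]$.

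Finally I would invoke the elementary fact that a subgroup of a free abelian group of rank $r$ (here $\sigma^{-1}\mathbb{Z}^{n}\cong\mathbb{Z}^{n}$, and likewise $\rho\mathbb{Z}^{m}$, $\nu^{-1}\mathbb{Z}^{n}$, $\varrho\mathbb{Z}^{m}$) has finite index precisely when it has full rank. Thus $[\sigma^{-1}\mathbb{Z}^{n}:\Lambda_{1}]<\infty\iff\mathrm{rank}_{\mathbb{Z}}\Lambda_{1}=n$ and $[\rho\mathbb{Z}^{m}:\Lambda_{2}]<\infty\iff\mathrm{rank}_{\mathbb{Z}}\Lambda_{2}=m$, and the very same ranks control the indices into $\nu^{-1}\mathbb{Z}^{n}$ and $\varrho\mathbb{Z}^{m}$. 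Consequently both indices $[L_{(\sigma,\rho)}:H]$ and $[L_{(\nu,\varrho)}:H]$ are finite, i.e.\ the lattices are commensurable, exactly when $(\mathrm{rank}_{\mathbb{Z}}\Lambda_{1},\mathrm{rank}_{\mathbb{Z}}\Lambda_{2})=(n,m)$, which is the asserted condition. The only genuinely delicate point I expect is the multiplicativity of the index in the third paragraph: one must be careful that $\pi$ carries $H$ \emph{onto} $\Lambda_{2}$ with kernel \emph{exactly} $\Lambda_{1}$ so that the two exact sequences align; once that is in place, everything else is bookkeeping with finitely generated abelian groups.
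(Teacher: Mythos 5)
Your proof is correct, and it reaches the conclusion by a more structural route than the paper's. Both arguments begin with the same observation, that $L_{(\sigma,\rho)}\cap L_{(\nu,\varrho)}=\Lambda_{1}\times\Lambda_{2}$ as sets, but the paper then works by hand: it chooses adapted bases $(v_{1},\dots,v_{n})$ of $\sigma^{-1}\mathbb{Z}^{n}$ and $(p_{1}v_{1},\dots,p_{n_{0}}v_{n_{0}})$ of $\Lambda_{1}$ (similarly $(w_{j})$, $(q_{j}w_{j})$ for the $\mathbb{R}^{m}$-side), exhibits the infinite family of cosets $\overline{(kv_{n},0)}$, $k\in\mathbb{Z}$, when a rank is deficient, and, when both ranks are full, shows by explicit multiplication in $G$ (decomposing $t=s+\kappa$, $\eta(-\kappa)x=\xi+\zeta$, and computing $(x,t)(\xi,s)^{-1}=(\eta(\kappa)\zeta,\kappa)$) that the finitely many elements $\left(\sum_{i}r_{i}v_{i},\sum_{j}s_{j}w_{j}\right)$ with $0\leq r_{i}<p_{i}$, $0\leq s_{j}<q_{j}$ represent all cosets. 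You instead factor the index abstractly: the tower $H\leq HN\leq L_{(\sigma,\rho)}$, the second isomorphism theorem, and the correspondence under the projection $\pi$ give $[L_{(\sigma,\rho)}:H]=[\rho\mathbb{Z}^{m}:\Lambda_{2}]\,[\sigma^{-1}\mathbb{Z}^{n}:\Lambda_{1}]$, after which the standard fact that a subgroup of a free abelian group of rank $r$ has finite index exactly when it has rank $r$ finishes both directions at once. Your version is shorter, avoids every computation with the twisted law, and isolates precisely where group structure enters; one point you should make explicit is that $N=\sigma^{-1}\mathbb{Z}^{n}\times\{0\}$ is normal in $L_{(\sigma,\rho)}$ (it is the intersection of $L_{(\sigma,\rho)}$ with the normal subgroup $\mathbb{R}^{n}\times\{0\}$ of $G$), since both the subgroup property of $HN$ and your use of $[HN:H]=[N:N\cap H]$ rest on it. What the paper's computational approach buys in exchange is self-containedness (it essentially reproves the rank--index fact via elementary divisors) and explicit coset representatives, hence the exact value $\prod_{i}p_{i}\prod_{j}q_{j}$ of the index rather than mere finiteness.
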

\begin{proof}
Recall that the intersection $L_{(\sigma,\rho)}\cap L_{(\nu,\varrho)}$ is given by
$$L_{(\sigma,\rho)}\cap L_{(\nu,\varrho)}=(\sigma^{-1}\mathbb{Z}^{n}\cap\nu^{-1}\mathbb{Z}^{n})\times
(\rho\mathbb{Z}^{m}\cap\varrho\mathbb{Z}^{m}).$$

Let $$n_{0}=\mathrm{rank}_{\mathbb{Z}}(\sigma^{-1}\mathbb{Z}^{n}\cap\nu^{-1}\mathbb{Z}^{n}),\quad
m_{0}=\mathrm{rank}_{\mathbb{Z}}(\rho\mathbb{Z}^{m}\cap\varrho\mathbb{Z}^{m}).$$
Let $v_{1},\ldots,v_{n}\in\sigma^{-1}\mathbb{Z}^{n}$ and $p_{1},\ldots,p_{n_{0}}\in\mathbb{N}$ such that $(v_{1},\ldots,v_{n})$ is a basis of $\sigma^{-1}\mathbb{Z}^{n}$ and $(p_{1}v_1,\ldots,p_{n_{0}}v_{n_{0}})$ is a basis of $(\sigma^{-1}\mathbb{Z}^{n}\cap\nu^{-1}\mathbb{Z}^{n})$.
Let $w_{1},\ldots,w_{m}\in\rho\mathbb{Z}^{m}$ and $q_{1},\ldots,q_{m_{0}}\in\mathbb{N}$ such that $(w_{1},\ldots,w_{m})$ is a basis of $\rho\mathbb{Z}^{m}$ and $(q_{1}w_{1},\ldots,q_{m_{0}}w_{m_{0}})$ is a basis of
$(\rho\mathbb{Z}^{m}\cap\varrho\mathbb{Z}^{m})$.

Suppose that $n_{0}<n$. In this case we get
$$\mathbb{Z}v_{n}\cap(\sigma^{-1}\mathbb{Z}^{n}\cap \nu^{-1}\mathbb{Z}^{n})=\{0\},
$$ and it follows that $\{\overline{(kv_{n},0)},\ k\in\mathbb{Z}\}$ is an infinite subset of $L_{(\sigma,\rho)}/L_{(\sigma,\rho)}\cap L_{(\nu,\varrho)}$, thus $L_{(\sigma,\rho)}$ and $L_{(\nu,\varrho)}$ are non commensurable, here $\overline{(kv_{n},0)}$ is the class of $(kv_{n},0)$ modulo $L_{(\sigma,\rho)}\cap L_{(\nu,\varrho)}$.

Similarly, if $m_{0}<m$ then $\{\overline{(0,lw_{m})},\ l\in\mathbb{Z}\}$ is an infinite subset of $L_{(\sigma,\rho)}/L_{(\sigma,\rho)}\cap L_{(\nu,\varrho)}$. This shows the non-commensurability.

Suppose now that $n_{0}=n$ and $m_{0}=m$. We prove that $L_{(\sigma,\rho)}/L_{(\sigma,\rho)}\cap L_{(\nu,\varrho)}$ is finite. Let $(x,t)\in L_{(\sigma,\rho)}$ with $t=t_{1}w_{1}+\ldots+t_{m}w_{m}\in\rho\mathbb{Z}^{m}$, where $(t_{1},\ldots,t_{m})\in\mathbb{Z}^{m}$. For each $i=1,\dots, m,$ write
$$
t_{i}=s_{i}+k_{i}q_{i},
$$
with $(s_{i},k_{i})\in\mathbb{N}\times\mathbb{Z}$ such that $0\leq s_{i}<q_{i}$. Then,
$$t=s+\kappa,
\quad\text{where}\quad
s=\sum_{j=1}^{m}s_{j}w_{j} \text { and } \kappa=\sum_{j=1}^{m}k_{j}q_{j}w_{j}.$$
Here, $s\in\rho\mathbb{Z}^{m}$ and $\kappa\in\rho\mathbb{Z}^{m}\cap\varrho\mathbb{Z}^{m}$.

Since $(0,-\kappa), (x,0)\in L_{(\sigma,\rho)}$, we have
$$(\eta(-\kappa)x,-\kappa)=(0,-\kappa)(x,0)\in L_{(\sigma,\rho)}.$$
Thus $\eta(-\kappa)x\in\sigma^{-1}\mathbb{Z}^{n}$, and we can write
$$\eta(-\kappa)x=\xi+\zeta,$$
where
$$
\xi=\sum_{i=1}^{n}r_{i}v_{i}\in\sigma^{-1}\mathbb{Z}^{n} \text { and } \zeta=\sum_{i=1}^{n}l_{i}p_{i}v_{i}\in\sigma^{-1}\mathbb{Z}^{n}\cap\nu^{-1}\mathbb{Z}^{n},
$$
with $r_{i}\in\mathbb{N}$, $0\leq r_{i}<p_{i}$ and $l_{i}\in\mathbb{Z}$.

Now, observe that $(\xi,s)\in L_{(\sigma,\rho)}$ and
\begin{align*}
(x,t)(\xi,s)^{-1}= &(x,t)(-\eta(-s)\xi,-s) \\
= & (x-\eta(t-s)\xi,t-s) \\
= & (x-\eta(\kappa)\xi,\kappa).
\end{align*}
Hence,
$$
(x,t)(\xi,s)^{-1}= (\eta(\kappa)\zeta,\kappa).
$$
Since the right-hand side belongs to $L_{(\sigma,\rho)}\cap L_{(\nu,\varrho)}$, it follows that
$$
\overline{(x,t)}=\overline{(\xi,s)}.
$$
This proves that
\begin{align*}
&L_{(\sigma,\rho)}/L_{(\sigma,\rho)}\cap L_{(\nu,\varrho)}=&\left\{\overline{\left(\sum_{i=1}^{n}r_{i}v_{i},\sum_{j=1}^{m}s_{j}w_{j}\right)},\
r_{i},s_{j}\in\mathbb{N},\ r_{i}<p_{i},\ s_{j}<q_{j}\right\}
\end{align*}
is finite. A similar argument shows that $L_{(\nu,\varrho)}/L_{(\sigma,\rho)}\cap L_{(\nu,\varrho)}$ is also finite. Therefore, the lattices $L_{(\sigma,\rho)}$ and $ L_{(\nu,\varrho)}$ are commensurable.

\end{proof}
\begin{lemma}\label{lem2}
Let $A\in GL_{n}(\mathbb{R})$. The following assertions are equivalent:
\begin{enumerate}[label=\roman*)]
\item $\mathrm{rank}_{\mathbb{Z}}(A\mathbb{Z}^{n}\cap\mathbb{Z}^{n})=n$
\item $A\in GL_{n}(\mathbb{Q})$
\end{enumerate}
\end{lemma}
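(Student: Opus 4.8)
The plan is to prove the two implications separately, using throughout that $A\mathbb{Z}^{n}\cap\mathbb{Z}^{n}$, being a subgroup of the free abelian group $\mathbb{Z}^{n}$, is itself free abelian of rank at most $n$; assertion i) thus says precisely that this intersection is a full-rank sublattice of $\mathbb{Z}^{n}$. The less delicate direction is ii) $\Rightarrow$ i), and the genuine content lies in the converse.

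For the implication ii) $\Rightarrow$ i) I would clear denominators. If $A\in GL_{n}(\mathbb{Q})$, choose a positive integer $d$ with $dA\in\mathbb{Z}^{n\times n}$. Then $dA\mathbb{Z}^{n}=A(d\mathbb{Z}^{n})\subseteq A\mathbb{Z}^{n}$, and also $dA\mathbb{Z}^{n}\subseteq\mathbb{Z}^{n}$ because $dA$ has integer entries, whence $dA\mathbb{Z}^{n}\subseteq A\mathbb{Z}^{n}\cap\mathbb{Z}^{n}$. Since $dA$ is invertible, the subgroup $dA\mathbb{Z}^{n}$ has rank $n$; as a subgroup of $\mathbb{Z}^{n}$ that contains a rank-$n$ subgroup has rank exactly $n$, we obtain $\mathrm{rank}_{\mathbb{Z}}(A\mathbb{Z}^{n}\cap\mathbb{Z}^{n})=n$, which is i).

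For the converse i) $\Rightarrow$ ii), which I expect to be the crux, the idea is to exhibit $A$ explicitly as a product of an integer matrix with the inverse of an integer matrix. Assuming $\mathrm{rank}_{\mathbb{Z}}(A\mathbb{Z}^{n}\cap\mathbb{Z}^{n})=n$, choose a $\mathbb{Z}$-basis $u_{1},\ldots,u_{n}$ of the intersection and let $P\in\mathbb{Z}^{n\times n}$ be the matrix whose columns are $u_{1},\ldots,u_{n}$; full rank makes $P$ invertible. Since each $u_{i}\in A\mathbb{Z}^{n}$, we may write $u_{i}=Ak_{i}$ with $k_{i}\in\mathbb{Z}^{n}$, and assembling the integer matrix $K$ with columns $k_{1},\ldots,k_{n}$ gives $P=AK$. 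The key observation is that $K=A^{-1}P$ is a product of invertible matrices, hence invertible, so $A=PK^{-1}$; as $P$ has integer entries and $K^{-1}=(\det K)^{-1}\,\mathrm{adj}(K)$ has rational entries, we conclude $A\in GL_{n}(\mathbb{Q})$.

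The main obstacle is the integrality bookkeeping in i) $\Rightarrow$ ii): one must verify that the pulled-back vectors $k_{i}$ are genuinely integral, which is guaranteed precisely by $u_{i}\in A\mathbb{Z}^{n}$, and that $K$ is invertible, which follows only because both $P$ and $A$ are invertible. Once these two points are secured, the passage from $A=PK^{-1}$ to rationality of $A$ is routine linear algebra over $\mathbb{Z}$.
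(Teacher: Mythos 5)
Your proof is correct and follows essentially the same route as the paper: for ii) $\Rightarrow$ i) both clear denominators with an integer multiple $dA$, and for i) $\Rightarrow$ ii) your matrix $K=A^{-1}P$ is exactly the paper's integer matrix $A^{-1}B$ (where $B$ is an integer matrix with $A\mathbb{Z}^{n}\cap\mathbb{Z}^{n}=B\mathbb{Z}^{n}$), leading to the same factorization of $A$ as an integer matrix times the inverse of an integer matrix.
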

\begin{proof}
Suppose that $\mathrm{rank}_{\mathbb{Z}}(A\mathbb{Z}^{n}\cap\mathbb{Z}^{n})=n$. Then there exists a matrix $B\in\mathbb{Z}^{n\times n}\cap GL_{n}(\mathbb{Q})$ such
$$A\mathbb{Z}^{n}\cap\mathbb{Z}^{n}=B\mathbb{Z}^{n}.$$
This implies $B\mathbb{Z}^{n}\subset A\mathbb{Z}^{n}$, and thus
$$A^{-1}B\mathbb{Z}^{n}\subset \mathbb{Z}^{n}.$$
Therefore $A^{-1}B\in GL_{n}(\mathbb{Q})$. Since $B\in GL_{n}(\mathbb{Q})$, it follows that $A\in GL_{n}(\mathbb{Q})$.

Conversely, suppose $A\in GL_{n}(\mathbb{Q})$. Then there exists a non zero integer $c(A)$ such that $c(A)A\in\mathbb{Z}^{n\times n}$. Furthermore, $\mathrm{rank}_{\mathbb{Z}}((c(A)A)\mathbb{Z}^{n})=n$ and since
$c(A)A\mathbb{Z}^{n}\subset\mathbb{Z}^{n}$, we have
$$\mathrm{rank}_{\mathbb{Z}}(A\mathbb{Z}^{n}\cap\mathbb{Z}^{n})=n.$$
\end{proof}
\begin{theorem}\label{Gcommensurable2}
Let $(\sigma ,\rho)$ and $(\nu,\varrho)$ be two $ G$-compatible pairs.
The lattices $L_{(\sigma ,\rho)}=\sigma^{-1}\mathbb{Z}^{n}\rtimes_\eta\rho\mathbb{Z}^{m}$ and $L_{(\nu,\varrho)}=\nu^{-1}\mathbb{Z}^{n}\rtimes_\eta\varrho\mathbb{Z}^{m}$ are commensurables if and only if $\nu\sigma^{-1}\in GL_{n}(\mathbb{Q})$ and $\varrho^{-1}\rho\in GL_{m}(\mathbb{Q})$.
\end{theorem}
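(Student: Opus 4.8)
The plan is to combine Proposition \ref{Gcommensurable1} with Lemma \ref{lem2}, the bridge between the two being the elementary observation that an invertible linear map preserves the $\mathbb{Z}$-rank of the intersection of two lattices. By Proposition \ref{Gcommensurable1}, the lattices $L_{(\sigma,\rho)}$ and $L_{(\nu,\varrho)}$ are commensurable if and only if
$$
\mathrm{rank}_{\mathbb{Z}}(\sigma^{-1}\mathbb{Z}^{n}\cap\nu^{-1}\mathbb{Z}^{n})=n
\quad\text{and}\quad
\mathrm{rank}_{\mathbb{Z}}(\rho\mathbb{Z}^{m}\cap\varrho\mathbb{Z}^{m})=m.
$$
Thus it suffices to rewrite each of these two rank conditions in the asserted matrix form.

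First I would treat the $\mathbb{R}^{n}$-factor. Since $\nu\in GL_{n}(\mathbb{R})$ is a linear automorphism, it sends the $\mathbb{Z}$-module $\sigma^{-1}\mathbb{Z}^{n}\cap\nu^{-1}\mathbb{Z}^{n}$ isomorphically onto
$$
\nu\bigl(\sigma^{-1}\mathbb{Z}^{n}\bigr)\cap\nu\bigl(\nu^{-1}\mathbb{Z}^{n}\bigr)
=\nu\sigma^{-1}\mathbb{Z}^{n}\cap\mathbb{Z}^{n},
$$
because an injective map commutes with intersections. Hence the two modules share the same rank, and applying Lemma \ref{lem2} with $A=\nu\sigma^{-1}\in GL_{n}(\mathbb{R})$ shows that this common rank equals $n$ if and only if $\nu\sigma^{-1}\in GL_{n}(\mathbb{Q})$. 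The $\mathbb{R}^{m}$-factor is entirely parallel: applying the automorphism $\varrho^{-1}\in GL_{m}(\mathbb{R})$ yields $\mathrm{rank}_{\mathbb{Z}}(\rho\mathbb{Z}^{m}\cap\varrho\mathbb{Z}^{m})=\mathrm{rank}_{\mathbb{Z}}(\varrho^{-1}\rho\mathbb{Z}^{m}\cap\mathbb{Z}^{m})$, which by Lemma \ref{lem2} (with $A=\varrho^{-1}\rho$) equals $m$ if and only if $\varrho^{-1}\rho\in GL_{m}(\mathbb{Q})$.

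Chaining these two equivalences through Proposition \ref{Gcommensurable1} then delivers the statement. I do not expect a genuine obstacle here: the argument is a direct substitution into the two previously established results, and the single point deserving care is the rank-preservation step, which rests only on the facts that an invertible linear map restricts to a $\mathbb{Z}$-module isomorphism of a lattice onto its image and commutes with taking intersections. The one thing to double-check is that the normalization of one lattice to $\mathbb{Z}^{n}$ (resp. $\mathbb{Z}^{m}$) is legitimate, which is guaranteed precisely because $\nu$ and $\varrho$ are invertible.
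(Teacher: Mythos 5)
Your proposal is correct and follows essentially the same route as the paper: reduce via Proposition \ref{Gcommensurable1} to the two rank conditions, transport each intersection by the invertible map $\nu$ (resp.\ $\varrho^{-1}$) to an intersection with $\mathbb{Z}^{n}$ (resp.\ $\mathbb{Z}^{m}$), and conclude with Lemma \ref{lem2}. The only difference is that you spell out the rank-preservation step (an invertible linear map is a $\mathbb{Z}$-module isomorphism commuting with intersections), which the paper passes over silently with ``this means that.''
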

\begin{proof}According to Proposition \ref{Gcommensurable1}, the lattices $L_{(\sigma ,\rho)}$ and $L_{(\nu,\varrho)}$ are commensurable if and only if
$$\mathrm{rank}_{\mathbb{Z}}(\sigma^{-1}\mathbb{Z}^{n}\cap\nu^{-1}\mathbb{Z}^{n})=n\quad\text{ and} \mathrm{rank}_{\mathbb{Z}}(\rho\mathbb{Z}^{m}\cap\varrho\mathbb{Z}^{m})=m.$$ This
means that $\mathrm{rank}_{\mathbb{Z}}(\nu\sigma^{-1}\mathbb{Z}^{n}\cap\mathbb{Z}^{n})=n$ and
$\mathrm{rank}_{\mathbb{Z}}(\varrho^{-1}\rho\mathbb{Z}^{m}\cap\mathbb{Z}^{m})=m$. According to Lemma \ref{lem2}, this equivalent that $\nu\sigma^{-1}\in GL_{n}(\mathbb{Q})$ and $\varrho^{-1}\rho\in GL_{m}(\mathbb{Q})$.
\end{proof}

\section{examples}
  \begin{itemize}
    \item [(a)] For $n=2$, consider the real symmetric matrix   $A=\left(
                                                      \begin{array}{cc}
                                                        2 & 1 \\
                                                        1 & 1 \\
                                                      \end{array}
                                                    \right)$. We can check that $A$ is definite positive hyperbolic matrix in $SL(2,\mathbb Z)$ and it exists $B\in\frak{sl}(2,\mathbb R)\cap GL(2,\mathbb R)$ such that $A=\exp(B)$.
Indeed, $A$ is diagonalizable over $\mathbb R$ with positive eigenvalues $$\lambda_1=\frac{3+\sqrt{5}}{2},\quad\lambda_2=\frac{3-\sqrt{5}}{2}.$$ Therefore, it exist $P\in GL_2(\mathbb R)$ and $D=diag(\lambda_1,\lambda_2)$ such that $A=PDP^{-1}$. For instance we can choose $P$ such that
$$
P=\left(
    \begin{array}{cc}
      \frac{1-\sqrt 5}{2} & \frac{1+\sqrt 5}{2} \\
      1 & 1 \\
    \end{array}
  \right).$$
Thus if we let $\Delta=diag( \ln\lambda_1,\ln\lambda_2)$, we obtain
$$
A=PDP^{-1}=Pe^{\Delta} P^{-1}.
$$ Thus the pair $(P,1)$ is $G-$compatible since $Pe^\Delta P^{-1}=A\in SL(2,\mathbb Z)$, and
$$L_{(P,1)}:=P^{-1}\mathbb Z^2\rtimes_\eta\mathbb Z$$ is a lattice in $G$ with
$$
 P^{-1}=\frac{1}{\sqrt 5}\left(
                                          \begin{array}{cc}
                                            -1 & \frac{1+\sqrt 5}{2} \\
                                            1 & -\frac{1-\sqrt 5}{2} \\
                                          \end{array}
                                        \right);\quad \eta(t)=e^{t\Delta}.$$

    \item [(b)] For $n=3$, let $A=\left(
                                            \begin{array}{ccc}
                                              k^2+1& 0 & k \\
                                              0 & 1 & l \\
                                              k& l & l^2+1 \\
                                            \end{array}
                                          \right)$ where $k, l\in\mathbb Z$.
                                          It is easy to check that   $A$ is definite positive matrix with $\det A=1$ and hence it is diagonalizable over $\mathbb R$ with positive eigenvalues
    $\lambda_1,\lambda_2, \lambda_3$. Therefore  it exist $P\in GL_3(\mathbb R)$ and a diagonal matrix $D$ with eigenvalues  $(\lambda_j)_{1\leq j\leq 3}$ such that $A=PDP^{-1}$. Now by letting $\Delta=\ln(D)=diag(\ln(\lambda_1),\ln(\lambda_2),\ln(\lambda_3))$. Thus if we let $G=\mathbb R^3\rtimes_\eta\mathbb R$ with $\eta(t)=e^{t\Delta}$, we see that the pair $(P,1)$ is $G-$compatible and  $L_{(P,1)}:=P^{-1}\mathbb Z^3\rtimes_\eta\mathbb Z$  is a lattice in $G$.

     \end{itemize}

\end{document}